\newtheorem*{theorem*}{Theorem}
\newtheorem{theorem}{Theorem}
\newtheorem{corollary}[theorem]{Corollary}
\newtheorem{proposition}[theorem]{Proposition}
\newtheorem{lemma}[theorem]{Lemma}
\newtheorem{remark}{Remark}
\newcommand{\apost}{\textquotesingle}
\newcommand{\pr}{\mathbb{P}}
\newcommand{\E}{\mathbb{E}}
\newcommand{\R}{\mathbb{R}}
\newcommand{\Z}{\mathbb{Z}}
\newcommand{\Na}{\mathbb{N}}
\newcommand{\Ze}{\textbf{Z}}
\newcommand{\Se}{\textbf{S}}
\newcommand{\B}{\mathcal{B}}
\DeclareMathOperator*{\sech}{sech}
\newcommand{\expo}{\textup{exp}}
\newcommand{\dis}{\displaystyle}
\newcommand{\til}{\widetilde}
\newcommand{\wdh}{\widehat}
\newcommand{\hs}{\hspace{2mm}}
\title{The several dimensional gambler's ruin problem}
\author{\textbf{Achillefs Tzioufas}\footnote{\small{\texttt{tzioufas@ime.usp.br}}}}
\begin{document}
\maketitle
\vspace{-6mm}

\begin{abstract}     
\noindent We consider the simple random walk on the $N$-dimensional integer lattice from the perspective of evaluating asymptotically the duration of play in the multidimensional gambler\apost s ruin problem. We show that, under suitable rescalings, all $p$-moments of exit-times from balls in the $L$-infinity metric, and all $p$-moments of partial-maxima values in this metric, possess associated asymptotic limit expressions, admitting two representations each. We derive for this purpose multidimensional refinements of the corresponding two-folded extension of Erd\H os-Kac theorem, which we revisit to this end. 
We show in particular a simplifying proof approach, which relies on an application of the optional stopping theorem, and yields the corresponding first-passage times asymptotics in parallel. We observe a direct manner of proof of the relation among the two limit expressions by Brownian motion scaling. 
We indicate in a manner intended to be brief and comprehensive other known proof approaches for the purposes of comparison and completeness. 
\vspace{5mm}

\noindent\textit{Key-words:} Weak limit laws; Brownian motion; Invariance principle; Convergence of moments; Exit times; Running maxima; Erd\H os-Kac theorem; Laplace transforms; Uniform integrability; Boundary value problems

\vspace{5mm}

\noindent \textit{AMS 2010 Mathematics Subject Classification:} Primary 60G50; Secondary 82B41

\end{abstract}

\section{ \Large  Introduction} 
 
\subsection{Motivation: fair games of chance}

Originating from correspondence of Blaise Pascal and Pierre de Fermat in 1656, the `gambler\apost s ruin problem' regards the game of two players engaging in a series of independent and identical bets up until one of them goes bankrupt, viz.\ ruined.\footnote{The first formulation of the gambler\apost s ruin problem had always been credited to work of Huygens in 1657, only because his correspondence, which mentions his source, was not published until 1888. For more on the historical background to this problem and its time-limited extension, we refer for instance  to the notes in [$\mathsection$ 7.5, Ethier \cite{E10}] and the references therein.} The general `gambler\apost s ruin formula', which regards the chances of each player winning, was shown by Abraham De Moivre in 1712. The solution to the problem of the `duration of play', which regards a `time-limited extension' of the said formula, also dates back to 1712 and is due to De Moivre.\footnote{A derivation of this formula may be found in [Feller \cite{Fl68}, Chpt.\ XIV, $\mathsection$ 5], where the technique of expanding rational functions in partial fractions is employed.} Different formulae for this were obtained afterward by Montmort, Nicolaus Bernoulli, as well as Joseph-Louis Lagrange.\footnote{see (7.38) and (7.164-5) respectively in \cite{E10}.} Regarding the fair bets case it is a celebrated result that the expected value of the duration of play equals the product of the initial fortunes of the players.\footnote{A derivation of this via Doob's Optional-Stopping Theorem may be found in any standard textbook in probability dealing with martingales.} Some of the original motivation for this work may be sought into study of generalizations of this formula in the higher-dimensional setting, which we describe next. 

The `fair gamblers\apost\mbox{ }ruin problem' may be cast as a simple betting game in the following settings, which extend the classical one in a natural way. In one of the interpretations, two players are both in possession of initial fortunes in a number of more than one currencies.  At every round of this game a fair bet takes place. The winner of the bet receives a payoff which amounts to one monetary unit of a currency chosen independently in an even way among all currencies. The game continues in independent rounds and is over as soon as either one of the players runs out of any currency.   

This problem may also be interpreted in the setting of a gambling competition between two teams with the same number of players. In this interpretation, opponents are matched into pairs in the outset. Every round of this game then consists of choosing evenly one of the pairs. The chosen pair of players then bets on a fair game and the winner receives one monetary unit from its opponent. The game carries on in this fashion for as long as none of the players of either team is bankrupt. Note that, since all bets are fair and the probability of winning for either player in the former interpretation, or for either team in the latter one, is obviously the same, the sole quantity of interest in the study of this game is the duration of its play. 

Nonetheless the aforementioned elegant and simple expression for the expected value of the duration of play in the one-dimensional fair gambler\apost s ruin problem, concerning the more general settings described, in the words of Orr and Zeilberger \cite{OZ94} `no closed-form solution of this problem is known to exist, and probably none does exist'.\footnote{Something which is also in accord with the, out of his own experience, belief of the author.} Prior to presenting our results regarding asymptotics for all moments of the duration of play in the fair gamblers\apost\mbox{} ruin problem, a brief review of the long mathematical history that the intimately associated `absorption problem' enjoys is given. 

\subsection{The Absorption Problem}\label{sub12}

The `simple one-dimensional random walk' is the celebrated discrete-time stochastic process comprising the sequence of successive partial sums of a sequence of independent uniformly distributed $\{-1,1\}$-valued random variables, which may be thought of as modeling the motion of a particle on $\Z$ jumping at every instance of time to either one of its nearest-neighbors sites according to the outcomes of this sequence. The `absorption problem' regards the asymptotic law of the times that the modulus of this walk attains new maxima values, under appropriate rescaling; note also that these times correspond to the particle\apost s `exit-times' from symmetric intervals about the origin.\footnote{Regarding the problem\apost s nomenclature, we note that it derives from the equivalent perspective of the asymptotic law of the duration of the motion of the particle in finite intervals with absorbing endpoints, as their length tends to infinity.} Note that, in principle this asymptotic law may be derived from any of the aforementioned solutions to the problem of the duration of play.

Spitzer [Chpt.\ V, \cite{Sp76}] provides with an original review to the absorption problem, pointing out to a list of earlier treatments for completeness\footnote{cf.\ [footnote 1, p.\ 237, \cite{Sp76}}. Regarding arbitrary zero-mean (positive and finite variance) increment-distributions random walks, the rigorous solution to this problem is due to Erd\H os and Kac [Theorem II, \cite{EK46}], although the asymptotic distribution was indeed already known in the beginnings of the previous century to Bachelier who worked in the context of Brownian motion. The crux of the Erd\H os and Kac solution to the general problem is that, for all zero-mean increment-distributions, the limiting distribution must be identical, up to a scaling constant which only depends on the variance of the increment-distribution. The study of random walks is intimately associated to the celebrated continuous-time stochastic process referred to as Brownian motion, a.k.a.\ the Wiener process. Indeed, the key idea of the Erd\H os and Kac solution paved the way to establishing the deep connection amongst random walks and Brownian motion, known as the `invariance principle', a.k.a.\ the `functional central limit theorem', or `Donsker\apost s theorem', due to Donsker \cite{Dn51}.\footnote{Note that the equivalence among the before-mentioned absorption problems for arbitrary zero-mean increment-distribution random walks and for the Brownian motion may be justified by an instance of this theorem, see [(\ref{TAe}), Theorem \ref{EK}] below for a precise formulation.}  

The mathematically rigorous theory of stochastic processes in continuous-time was initiated with the seminal work by Kolmogorov \cite{Kl31}. A rigorous solution of the associated absorption problem for Brownian motion was made available via the celebrated `L\'evy\apost s triple law', which regards the joint probability distribution of the Brownian motion at fixed times together with its running minimum and maximum, and is derived by L\'evy \cite{Lv48}.\footnote{For a justification of this claim we refer to [footnote 1, p.\ 169, Schilling and Partzsch \cite{SR14}]; cf.\ also with Remark \ref{levrem} below for pointers to the literature regarding its proof.} The closely related distribution of the running-maxima of the modulus of the Brownian Bridge, which corresponds to the distribution of the error term in the Kolmogorov-Smirnov statistic, was derived by Kolmogorov \cite{Kl33}.\footnote{For a derivation of this we refer to [Feller \cite{Fl70}, Chpt.\ I, $\mathsection$ 12].}

A different, elementary approach for deriving a time-limited gambler\apost s ruin expression in the fair bets case is shown in [$\mathsection$ 21, \cite{Sp76}].\footnote{The technique there relies on generalizations of spectral representations for transition matrices and orthogonal polynomials, and is different from those mentioned above; see, for instance, [$\mathsection$ 5, Chpt.\ 10, Karlin and Taylor \cite{KT81}] for an introduction to this approach.} This expression allows by means of a limiting procedure the derivation of a solution to the absorption problem, cf.\ [$\mathsection$ 21, Proposition 5, \cite{Sp76}], which by appeal to the invariance principle yields the corresponding result in the general case of arbitrary zero-mean increment-distribution random walks. Nevertheless, relying on the method of moments, an extension of this approach to cover the general case is in addition offered in [$\mathsection \mathsection$ 22, 23, \cite{Sp76}]; this approach does not only rely on technically challenging and novel methods of intrinsic interest, but also provides with various additional results, interesting in their own right.

\subsection{The multidimensional case}
The simple $N$-dimensional random walk is the basic discrete-time stochastic process modeling the motion of a particle on $\Z^{N}$ that is composed by unit-length displacements in each of the cardinal directions with equal probability, independently at every instance of time.\footnote{More formally this may be put as considering the successive partial-sums of uniformly distributed $\mathcal{B}_{N}\cup -\mathcal{B}_{N}$-valued random vectors, where $\mathcal{B}_{N}$ is the standard orthonormal basis of unit vectors of $\Z^{N}$ and $-\mathcal{B}_{N}$ are their negative-signed counterparts; cf.\ with (\ref{eq1}) for precise definition.} The analysis and understanding of random walks has been the epicenter of the theory of probability since the very inception of the subject, and is therefore one of its most exhaustively studied topics. For the classic reference devoted to the random walk, we refer to Spitzer \cite{Sp76}; for more recent accounts, we refer to Lawler and Limic \cite{LL10} and R\'ev\'esz \cite{R90}.   For introductory treatments devoted to the random walk, we refer to Lawler \cite{Lw10}. For  resources on background probability material, including excellent accounts on basic random walk theory, we refer to Billingsley \cite{Bl68}, Breiman \cite{Br92}, and Feller \cite{Fl68, Fl70}, whose everlasting influences cannot be overestimated. For more recent treatments in this regard, we refer to Durrett \cite{Dr10}, Gut \cite{G12, G09}, Kallenberg \cite{Kll97}, Stroock \cite{St05}, and Williams \cite{W91}. For accounts laying emphasis to Brownian Motion, we refer to M\"orters and Peres \cite{MP10}, Revuz and Yor \cite{RY94}, Rogers and Williams \cite{RW93}, and Schilling and Partzsch \cite{SR14}.

As in the 1-dimensional case, asymptotics for the duration of play in the fair gamblers\apost\mbox{} ruin problem are associated to multidimensional absorption problems or, in other words, to asymptotics for the corresponding absorption times in many dimensions. Further, it is easy to see that, in the case of equal initial fortunes, the duration of play in the fair gamblers\apost\mbox{} ruin problem corresponds to exit-times of the $N$-dimensional walk from hypercubes, which is, $L^{\infty}$-balls about the origin. Note that the expected values of exit-times from $L^2$-balls of radius $r$ about the origin scaled with $r^{2}$ tends to 1, as $r \rightarrow \infty$, regardless of $N$.\footnote{One can easily show that indeed this expected value lies in $[r^{2}, (r+1)^{2})$ by a straightforward extension of the application of Doob\apost\mbox{} s Optional-Stopping Theorem we mentioned in footnote 3; cf., for instance, with [$\mathsection$ 1.4.2, \cite{Lw10}] for an explicit computation.}

Theorem \ref{momeN} is our main result and is formally stated below in Section \ref{S3}. Its first part provides asymptotics for all $p$-moments of the duration of play in $N$-dimensions, which in contrast turn out to depend on the dimension $N$. In addition, the second part of Theorem \ref{momeN} provides with asymptotic limits for all $p$-moments of partial-maxima values of the $N$-dimensional walk in the $L^{\infty}$-metric 
 under appropriate rescaling. These two asymptotic limits turn out to be associated, which is an aftereffect of the fact that the two random sequences are inverses of one another, in that exit times correspond to instances that new partial-maxima values are attained in this metric. In this sense Theorem  \ref{momeN} makes the resultant limit interconnection precise. The technique of proof of Theorem \ref{momeN} is elementary probabilistic with a perspective to studies of the random walk in conjunction with the Brownian motion, and indeed Proposition \ref{corBM} regarding the latter is key to its proof.

We note that the former-mentioned asymptotics in the first part of Theorem \ref{momeN} extend those derived by Kmet and Petkov\v{s}ek \cite{KP02}, that deal with expected values (1-moments) by means of discrete Fourier methods for solving the associated Poisson partial differential equation. The expressions we derive in this case are also contrasted and found to be simpler alternatives to the general-dimension asymptotics in \cite{KP02}, mostly in that our formulae involve $1$-fold instead of $N$-fold sums (cf.\ Remark \ref{kpet}). Our main results we made mention of are formally presented in Section \ref{S3} below. 

In addition, in Section \ref{EKsec} we revisit the version of the Erd\H os and Kac theorem, stated in Theorem \ref{EK} below. Subsection \ref{subEK} comprises a related observation and a simple consequence of this theorem, which we invoke later. 
In Subsection \ref{app} we show an elementary path, which to the limits of our knowledge is not pursued elsewhere, to prove this Theorem for completeness,  whereas in Subsection \ref{routeM} we give a miscellany of other available proof approaches to this theorem from the literature. 

\section{Statement of Results}\label{S3}

\subsection{The Simple Random Walk on $\Z^{N}$}
The simple random walk $(\Ze_{t}: t \geq 0)$ on the $N$-dimensional integer lattice $\Z^{N}$, may be defined via a collection of independent random variables $(\boldsymbol{\omega}_{s}: s \geq 1)$ with identical distribution which is given by  
\[
\pr(\boldsymbol{\omega}_{s} = \boldsymbol{w}) = \frac{1}{2N}, \hspace{3mm} \mbox{for } \mbox{ }  \boldsymbol{w} = \pm {\bf e}_{i}, \mbox{ }  i =1, \dots, N,
\] 
where ${\bf e}_{i}$ is the $N$-dimensional vector whose $i$th component equals $1$ and others equal $0$ \footnote{so that $\mathcal{B}_{N} = ({\bf e}_{i})_{i = 1, \dots, N}$, mentioned above, is the standard orthonormal basis of $\R^{N}$.}. To define the process, we then let 
\begin{equation}\label{eq1}
\dis{\Ze_{t} = \sum_{s=1}^{t}\boldsymbol{\omega}_{s}}. 
\end{equation}

Furthermore, we may define $(\Ze_{t})$ as the time-homogeneous Markov chain with state-space $\Z^{N}$ and transition probabilities given by, 
\begin{equation}\label{eq2}
\dis{ \pr(\Ze_{t} = {\bf z}|\mbox{ } \Ze_{t-1} = {\bf y}) = \frac{1}{2N}}, 
\end{equation}
for ${\bf z} - {\bf y} = \{\pm {\bf e}_{i}, i =1, \dots, N\}$, and $\Ze_{0} = {\bf 0}$.  

\subsection{A preparatory statement}
We give the following auxiliary statement we require  for stating our main result below. In order to see its connection with $\Ze_{t}$, note that the covariance matrix of this process is $\Sigma:= \E({\bf Z}_{1}{\bf Z}_{1}^{T}) = \frac{1}{N}I$, where $I$ denotes the identity matrix.

\begin{proposition}\label{corBM}
Let $\wdh{\bf{W}}_{s}$ be $N$-dimensional Brownian motion with covariance matrix $\frac{1}{N}I$. Let also $\wdh{T}_{N} = \inf\{s: {\bf \wdh{W}}_{s} \not\in {\bf B}_{1} \}$, where $\dis{{\bf B}_{1} = \{ {\bf x} \in \R^{N}: |{ \bf x}| \leq 1\}}$ and, in addition, let  $\wdh{M}_{N} = \sup_{0 \leq s \leq 1}|{\bf \wdh{W}}_{s}|. $
We have that
\begin{equation}\label{Ftil}
F_{N}(t) := \pr(\wdh{T}_{N}<t) = 1- \left(H\left(\frac{t}{N^{2}}\right)\right)^{N},
\end{equation}
and that 
\begin{equation}\label{Gtil}
G_{N}(x) := \pr(\wdh{M}_{N} <x) = \left(H\left(\frac{1}{N^{2}x^{2}}\right)\right)^{N},
\end{equation}  
where 
\begin{equation}\label{eq00}
H(y) = \frac{4}{\pi} \sum_{n \geq 0} \frac{(-1)^{n}}{2n+1} \cdot \textup{exp}\Big(-\frac{\pi^{2}}{8} (2n+1)^{2} y \Big), \mbox{ } y >0. 
\end{equation}
\end{proposition}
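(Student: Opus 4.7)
The plan is to decouple the $N$-dimensional problem into $N$ independent one-dimensional problems and then invoke the classical distribution of the running maximum of a one-dimensional Brownian motion. Since the covariance matrix is a scalar multiple of the identity, the coordinate processes $\wdh W^{(1)}, \ldots, \wdh W^{(N)}$ are mutually independent one-dimensional Brownian motions of a common variance. Reading $|\,\cdot\,|$ as the $L^\infty$-norm, which is consistent with the product-of-$N$ form of the claimed answer and with the paper's focus on hypercubes, the ball $\mathbf{B}_1$ is the cube $[-1,1]^N$, and $\wdh{\mathbf W}$ leaves it the instant any coordinate first hits $\pm 1$. Setting $\tau_i := \inf\{s : |\wdh W^{(i)}_s|=1\}$ and $M_i := \sup_{0\le s\le 1}|\wdh W^{(i)}_s|$, it follows that $\wdh T_N = \min_i \tau_i$ and $\wdh M_N = \max_i M_i$; independence across coordinates then yields
\[
\pr(\wdh T_N \ge t) = \pr(\tau_1 \ge t)^N, \qquad \pr(\wdh M_N < x) = \pr(M_1 < x)^N.
\]

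Next I would observe that (\ref{Ftil}) and (\ref{Gtil}) are equivalent under Brownian scaling. The tautology $\{\wdh T_N > t\} = \{\sup_{s \le t}\|\wdh{\mathbf W}_s\|_\infty < 1\}$, combined with the scaling identity $\sup_{s\le t}\|\wdh{\mathbf W}_s\|_\infty \stackrel{d}{=} \sqrt{t}\,\wdh M_N$, yields $F_N(t) = 1 - G_N(1/\sqrt{t})$, which one reads off directly from the two displayed formulae since $H\!\left(1/(N^2 (1/\sqrt{t})^2)\right) = H(t/N^2)$. Hence it suffices to derive only one of the two identities.

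The only remaining input is then the classical one-dimensional identity
\[
\pr\!\left(\sup_{0\le s\le t}|B_s| < a\right) = H\!\left(\frac{t}{a^2}\right)
\]
for standard one-dimensional Brownian motion $B$, from which both (\ref{Ftil}) and (\ref{Gtil}) follow on inserting the variance carried by each coordinate and raising to the $N$th power. I would derive this by solving the heat equation on $(-a,a)$ with absorbing (Dirichlet) boundary conditions via separation of variables: the transition density of Brownian motion killed at $\pm a$ expands in the Laplacian eigenbasis with eigenvalues $(2n+1)^2\pi^2/(8a^2)$---only the even-indexed eigenfunctions contributing, by the parity of the initial Dirac mass at $0$---and integration over $(-a,a)$ delivers precisely the coefficients $4(-1)^n/[\pi(2n+1)]$ appearing in (\ref{eq00}). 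Alternatively one may invoke L\'evy's triple law, to which the paper alludes in Section \ref{sub12}, or the iterated reflection principle across the two barriers. The main obstacle is justifying the eigenfunction expansion of the killed transition density and the termwise integration of the resulting series: both are standard (Mercer's theorem, uniform absolute convergence on compacts) but this is where I would expect most of the care to be required.
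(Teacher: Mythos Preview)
Your approach is correct and mirrors the paper's almost step for step: independence of the coordinate processes under a scalar covariance, the product formulas $\pr(\wdh T_N\ge t)=\pr(\tau_1\ge t)^N$ and $\pr(\wdh M_N<x)=\pr(M_1<x)^N$, and the link $F_N(t)=1-G_N(1/\sqrt t)$ via the event identity $\{\wdh T_N>t\}=\{\sup_{s\le t}|\wdh{\bf W}_s|<1\}$ together with Brownian scaling. The only cosmetic difference is that the paper imports the one-dimensional input $\pr(\sup_{s\le t}|B_s|<a)=H(t/a^2)$ from its Corollary~\ref{propmax} (itself a consequence of Theorem~\ref{EK}), whereas you sketch deriving it directly via separation of variables or reflection.
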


\begin{remark}
\textup{The function $H(\cdot)$ can be expressed in an alternative manner, as follows.
\begin{equation}\label{eq01}
H(y) = \int_{-1/\sqrt{y}}^{1/\sqrt{y}} \sum_{k = -\infty}^{\infty}(-1)^{k} \exp\left(-\frac{1}{2} \left(t+\frac{2k}{\sqrt{y}}\right)^{2}\right) dt.
\end{equation}
For practical purposes, (\ref{eq00}) is more useful for large values of $y$, whereas (\ref{eq01}) converges faster only for small values of $y$.\footnote{cf.\ with, for instance, [Remark 1, p.\ 21, R\'ev\'esz, \cite{R90}].}. For more on (\ref{eq00}), (\ref{eq01}) and their equivalence, see Subsection \ref{routeM} below. Finally, for ease of reference below, we note here that from (\ref{Ftil}) we clearly have that, for all $p \geq 1$, 
\begin{equation}\label{computmom}
\pr(\wdh{T}_{N}^{p} \geq t) = 1- F_{N}(t^{1/p}) = \left(H\left(\frac{t^{1/p}}{N^{2}}\right)\right)^{N}.
\end{equation} 
}
\end{remark}

\subsection{Main result}
In order to state our main result next, we let 
\begin{equation}\label{defTtil}
\til{T}_{N, r} = \min\{t: \Ze_{t} \not\in \B_{r}\}, 
\end{equation}
where $\dis{\B_{r} = \{ {\bf z} \in \Z^{N}: |{ \bf z}| \leq r\}}$, and $|\cdot|$ denotes the $L$-infinity norm\footnote{Recall that $|{\bf z}| = \max\{|z_{1}|, |z_{2}|, \dots, |z_{N}|\}$, ${ \bf z} = (z_{1}, \dots, z_{N})$, so that $\B_{r}$ is the hypercube with vertices $(\pm r, \dots, \pm r)$, a.k.a.\ the Moore neighborhood range $r \geq 1$.}. 
Furthermore, we let 
\begin{equation}\label{defMtil}
\til{M}_{N, t}= \max_{1 \leq s \leq t}|\Ze_{s}|. 
\end{equation}

\begin{theorem}\label{momeN} 
We have that
\begin{equation}\label{eqA}
\frac{1}{r^{2p}} \E \til{T}_{N, r}^{p}  \rightarrow \E \wdh{T}_{N}^{p} \hs \mbox{ as } r \rightarrow \infty, 
\end{equation} 
and further, that 
\begin{equation}\label{eqB}
\frac{1}{t^{p/2}}\E \til{M}_{N, t}^{p} \rightarrow  \E \wdh{M}_{N}^{p}  \hs \mbox{ as } t \rightarrow \infty,
\end{equation}
$N, p\geq 1$, where $\wdh{T}_{N}$ and $\wdh{M}_{N}$ are as in Proposition \ref{corBM}. 
\end{theorem}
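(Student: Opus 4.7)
The plan is to obtain both (\ref{eqA}) and (\ref{eqB}) by combining Donsker's invariance principle with a uniform-integrability argument, thereby upgrading distributional convergence into convergence of $p$-moments for every $p \geq 1$. Proposition \ref{corBM} is not used in this scheme except for the identification of the limiting law; the key ingredients are a functional CLT input together with separate tail estimates for the walk.

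First, I would invoke Donsker's theorem: since the increments $\boldsymbol{\omega}_s$ are i.i.d., mean-zero, and square integrable with covariance $\frac{1}{N}I$, the linearly interpolated rescaled paths $(\Ze_{\lfloor r^{2} s \rfloor}/r)_{s \geq 0}$ converge weakly in $C([0, \infty), \R^{N})$ to the Brownian motion $\wdh{\mathbf{W}}$ of Proposition \ref{corBM}. Applying continuous mapping to the exit-time functional $f \mapsto \inf\{s: |f(s)| > 1\}$, which is a.s.\ continuous at Brownian paths (since $\wdh{\mathbf{W}}$ a.s.\ crosses $\partial \B_1$ at a non-tangential instant), yields $\til{T}_{N, r}/r^{2} \Rightarrow \wdh{T}_{N}$. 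Applying continuous mapping to the running-maximum functional $f \mapsto \sup_{s \leq 1}|f(s)|$, which is Lipschitz in the uniform topology, yields $\til{M}_{N, t}/\sqrt{t} \Rightarrow \wdh{M}_{N}$ after rescaling both axes by $t$ via Brownian self-similarity.

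The moment statements (\ref{eqA}) and (\ref{eqB}) then follow once one establishes uniform integrability of $(\til{T}_{N, r}/r^{2})^{p}$ in $r$ and of $(\til{M}_{N, t}/\sqrt{t})^{p}$ in $t$, which I would deduce from uniformly bounded $q$-th moments for every $q > p$. For the running maxima the argument is brief: each component $(Z_s^{(i)})_{s \geq 0}$ is a martingale whose increments are bounded by $1$, so the Azuma--Hoeffding maximal inequality gives $\pr(\max_{s \leq t}|Z_s^{(i)}| \geq x\sqrt{t}) \leq 2 e^{-x^{2}/2}$, and a union bound over $i = 1, \dots, N$ produces Gaussian tails for $\til{M}_{N, t}/\sqrt{t}$ uniformly in $t$, which bounds every moment.

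For the exit-times I would use a block argument based on the strong Markov property: by comparison with the first-coordinate process one shows that there exist constants $C, \delta > 0$ depending only on $N$ such that, starting from any state in $\B_r$, the walk exits $\B_r$ within $Cr^{2}$ steps with probability at least $\delta$; iterating then gives $\pr(\til{T}_{N, r} > kCr^{2}) \leq (1-\delta)^{k}$, hence $\sup_r \E(\til{T}_{N, r}/r^{2})^{q} < \infty$ for every $q \geq 1$. The main obstacle I anticipate is precisely this uniform-in-$r$ tail estimate: although elementary in principle, pinning down the constants uniformly both in the starting position inside the hypercube and in $r$ is where the real bookkeeping lies; the remaining ingredients---weak convergence, continuous mapping, and the Gaussian maximal bound---are then standard, and (\ref{eqA})--(\ref{eqB}) are obtained by passing to the limit in $\E X_n^{p} = \int_0^{\infty} p t^{p-1} \pr(X_n > t)\,dt$ via bounded convergence combined with the tail-integrable dominators supplied above.
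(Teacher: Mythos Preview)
Your overall architecture---Donsker plus continuous mapping for the weak limits, then uniform integrability to upgrade to moment convergence---is exactly the paper's strategy, and your application of the functional CLT to the exit-time and running-maximum functionals matches Lemma \ref{TMcon}. Where you diverge is in establishing uniform integrability. The paper reduces to dimension one by a coupling (Lemma \ref{propcoup}) that bounds $\til{T}_{r}$ by a sum of $N$ independent one-dimensional exit times and $\til{M}_{t}$ by a sum of $N$ independent one-dimensional running maxima; uniform integrability in one dimension is then imported from Spitzer's moment results for $\tau_b$ and from a Billingsley-type maximal inequality for $m_t$ (Lemma \ref{lmomEK}). Your route is more self-contained: the Azuma--Hoeffding maximal bound applied to each coordinate martingale gives the Gaussian tail for $\til{M}_{N,t}/\sqrt{t}$ directly, with no coupling needed; and the geometric block argument for $\til{T}_{N,r}$ avoids any appeal to Spitzer. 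The ``bookkeeping'' you flag as the main obstacle is in fact quite short: from any $z\in\B_r$ the first-coordinate process satisfies $\E_z[T^{(1)}_r]\le N(r+1)^2$ by optional stopping on $(Z^{(1)}_t)^2 - t/N$, so Markov's inequality gives $\pr_z(\til{T}_{N,r}>3Nr^2)\le \pr_z(T^{(1)}_r>3Nr^2)\le 1/2$ for all $r\ge 1$, furnishing your $\delta=1/2$ uniformly. Your approach thus buys independence from the cited one-dimensional literature at the cost of redoing those estimates; the paper's approach buys brevity by outsourcing them.
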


\begin{remark}\label{kpet}
\textup{The methods in \cite{KP02} yield an explicit formula for the limit considered in (\ref{eqA}) in the case $p=1$, see [\cite{KP02}, $\mathsection$ 5, Theorem 2], and further, yield an estimate for the associated convergence rate for $N=2$, see [\cite{KP02}, $\mathsection$ 5, Theorem 1]. Their formula in our notation for the limit for $p=1$ in (\ref{eqA}) equals 
\begin{equation*}\label{eqpet}
N \Big( 1 - \frac{2^{2N+1}}{\pi^{N+1}} \sum_{k_{1}, k_{2}, ..., k_{N} \geq 0} \frac{{(-1)}^{\sum_{j=1}^{N-1}k_{j}} \prod_{j=1}^{N-1}(1/(2k_{j}+1))\sum_{j=1}^{N-1}(1/(2k_{j}+1)^{2})}{\cosh(\frac{\pi}{2}) \sqrt{ \sum_{j=1}^{N-1}(2k_{j}+1)^{2}}} \Big). 
\end{equation*}
On the other hand, [(\ref{eqA}), Theorem \ref{momeN}] together with (\ref{computmom}), by standard moment expressions for positive random variables, and a change of variables, gives that the limit for any $p\geq 1$ in (\ref{eqA}) equals}
\begin{equation*}
p  \int_{0}^{\infty} t^{p-1} \left(H\left(\frac{t}{N^{2}}\right)\right)^{N} dt,
\end{equation*}
\noindent \textup{where $H$ may be as in (\ref{eq00}), or as in (\ref{eq01}).}
\end{remark}

\subsection{A Preliminary}
A key preliminary result will be the celebrated Erd\H os and Kac \cite{EK46} theorem, an extended two-folded version that is apt for our purposes is stated next. 

\begin{theorem}\label{EK}
Let $S_{t} = \sum_{s=1}^{t} \xi_{s}$, where $(\xi_{i}; i \geq 1)$ are i.i.d.\ random variables such that $\E(\xi_{i}) = 0$ and that $\E(\xi^{2}_{i}) = 1$. Let $\tau_{b} = \inf\{ t: |S_{t}| \geq b\}$ and also let  $m_{t} = \max\{|S_{i}|: i \leq t\}$. Further, let $(W(t): t \in \R_{+})$ be standard linear Brownian motion and let $T = \inf\{t: |W(t)| = 1\}$ and also let $M = \sup_{t \in [0,1]}|W(t)|$.
We have that
\begin{equation}\label{TAe} 
\frac{\tau_{b}}{b^{2}} \xrightarrow{d} T \hs \mbox{ as } b \rightarrow \infty,
\end{equation}
and that,
\begin{equation}\label{Be}
\frac{1}{\sqrt{t}} m_{t} \xrightarrow{d} M \hs \mbox{ as } t \rightarrow \infty. 
\end{equation}
Further, if $\Phi(t) := \pr(T<t)$ and $\Gamma(x) := \pr(M <x)$, then we have that
\begin{equation}\label{PHI}
\Phi(t) = 1-  H(t),
\end{equation}
and that 
\begin{equation}\label{GAM}
\Gamma(x) = H\left(\frac{1}{x^{2}}\right), 
\end{equation}
where $H(\cdot)$ is as in (\ref{eq00}), or (\ref{eq01}).
\end{theorem}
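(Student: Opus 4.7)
The plan is to split the proof into two essentially independent pieces: the weak-convergence statements (\ref{TAe}) and (\ref{Be}) I would obtain from Donsker's invariance principle together with an elementary duality between exit times and partial maxima, while the explicit identities (\ref{PHI}) and (\ref{GAM}) I would derive from the spectral decomposition of the Brownian heat semigroup on $[-1,1]$ with absorbing boundary, combined with Brownian scaling.

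I would begin with (\ref{Be}). By Donsker's theorem the polygonally interpolated process $W^{(t)}_s := S_{\lfloor ts\rfloor}/\sqrt{t}$, $s \in [0,1]$, converges in law in $D[0,1]$ to standard Brownian motion $(W_s)_{s \in [0,1]}$, whose sample paths are a.s.\ continuous. Hence Skorokhod convergence upgrades to uniform convergence and the functional $\omega \mapsto \sup_{s \in [0,1]}|\omega(s)|$ is a.s.\ continuous at the Brownian limit, so the continuous mapping theorem yields (\ref{Be}). For (\ref{TAe}) I would avoid treating $\tau_b$ as a direct path functional (it is only lower semicontinuous) and instead use the duality $\{\tau_b > t\} = \{m_t < b\}$, valid since $\tau_b$ is integer-valued. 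Setting $t = \lfloor ub^2 \rfloor$ this yields $\pr(\tau_b/b^2 > u) = \pr(m_{\lfloor ub^2\rfloor} < b)$, and dividing through by $\sqrt{ub^2}$ the right-hand side tends, by (\ref{Be}) and Slutsky, to $\pr(M < 1/\sqrt{u})$, which by Brownian scaling $(W_{us})_{s\ge 0} \stackrel{d}{=} (\sqrt{u} W_s)_{s\ge 0}$ coincides with $\pr(T > u)$; the absence of atoms in the laws of $M$ and $T$ validates the identification at every $u > 0$.

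For (\ref{PHI}), let $p(t,x)$ denote the sub-probability density of $W_t$ killed at $\pm 1$ and started at $0$, so that $\pr(T > t) = \int_{-1}^{1} p(t,x)\,dx$. The function $p$ solves $\partial_t p = \tfrac{1}{2}\partial_x^2 p$ on $(-1,1)$ with Dirichlet boundary values and initial condition $\delta_0$. I would separate variables on $[-1,1]$, noting that the sine eigenfunctions vanish at the starting point $0$ and contribute nothing, leaving only the cosine eigenfunctions $\phi_n(x) = \cos((2n+1)\pi x/2)$ with eigenvalues $-\pi^2(2n+1)^2/8$, $\|\phi_n\|_{L^2([-1,1])}^2 = 1$ and $\phi_n(0) = 1$, giving
\[
p(t,x) = \sum_{n \geq 0} \cos\!\bigl((2n+1)\pi x/2\bigr)\exp\!\bigl(-\pi^2(2n+1)^2 t/8\bigr).
\]
Term-by-term integration against $\int_{-1}^{1}\cos((2n+1)\pi x/2)\,dx = 4(-1)^n/((2n+1)\pi)$ yields $\pr(T > t) = H(t)$, which is (\ref{PHI}); and (\ref{GAM}) then follows by $\{M < x\} = \{T_x > 1\}$ for $T_x := \inf\{s: |W_s| = x\}$ and the scaling identity $T_x \stackrel{d}{=} x^2 T$.

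The main technical obstacle is the handling of the exit-time functional in the passage to the limit, which I sidestep via the duality with maxima. The equivalence of the two representations (\ref{eq00}) and (\ref{eq01}) of $H$ is not logically required for the theorem, but worth recording: expression (\ref{eq01}) is precisely the method-of-images representation of the same killed density $p(t,x)$ as an alternating sum $\sum_{k \in \Z}(-1)^k$ of free Gaussian kernels centered at the reflected images $2k$ of the origin, integrated in $x$ over $[-1,1]$ and rescaled; the passage between (\ref{eq00}) and (\ref{eq01}) is an instance of Jacobi's imaginary transformation for theta functions, equivalent to Poisson summation, which explains why each form is preferable for numerical evaluation in complementary ranges of the argument.
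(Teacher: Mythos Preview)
Your proof is correct, but the route you take for the explicit formulas (\ref{PHI}) and (\ref{GAM}) is genuinely different from the paper's. For the weak-convergence parts both arguments rest on Donsker; your derivation of (\ref{TAe}) from (\ref{Be}) via the duality $\{\tau_b>t\}=\{m_t<b\}$ and Brownian scaling is essentially the content of the paper's Proposition~\ref{propFG} run in reverse (the paper instead applies the continuous mapping theorem directly to the exit-time functional in Lemma~\ref{TMcon}, asserting its a.s.\ continuity at Brownian paths, and reserves the duality for linking $\Phi$ with $\Gamma$). The substantive divergence is in how $H$ is identified: the paper computes the Laplace transform $\E e^{-\theta T}=\sech\sqrt{2\theta}$ by applying Doob's optional stopping theorem to the exponential martingale $(\sech\theta)^n e^{\theta S_n}$ for the \emph{simple} random walk (Proposition~\ref{T1}), passes to the limit via Lemma~\ref{limlam}, and then inverts through the two series expansions (\ref{arrp1}) and (\ref{sec2}) of $\sech$; you instead expand the killed heat kernel on $[-1,1]$ in Dirichlet eigenfunctions and integrate term by term --- precisely the alternative the paper lists in Remark~\ref{FKR}. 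The optional-stopping route buys the first-passage-time law of Corollary~\ref{stable} as a parallel byproduct and keeps the computation inside discrete probability until the very last step; your spectral route is shorter and needs no Laplace inversion or random-walk calculation, at the cost of importing the eigenfunction expansion for the Dirichlet Laplacian.
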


\subsection{Outline of the proofs}

The remainder comprises Sections \ref{EKsec} and \ref{31p}. An outline of their contents and of the manner in which we organize these sections is given as follows.

In Section \ref{31p} we give the proofs of Theorem \ref{momeN} and Proposition \ref{corBM} stated above. The proof of Theorem \ref{momeN} relies on first showing convergence in distribution analogues of [(\ref{eqA}), Theorem \ref{momeN}] and of [(\ref{eqB}), Theorem \ref{momeN}], derived from Proposition \ref{corBM} combined with applications of the multidimensional functional central limit theorem, carried out in Lemma \ref{TMcon}. To extend convergence in distribution to the convergence of moments in Theorem \ref{momeN}, we prove in Proposition \ref{uin} uniform integrability of the sequences of random variables in (\ref{eqA}) and (\ref{eqB}). The proof of Proposition \ref{uin} is  carried out in two steps and requires some preparatory work, which we take up in Subsection \ref{prep}. 

The first step comprises of proving this statement in dimension  $N=1$, and is done in Lemma \ref{lmomEK}.
We note that, regarding 1-dimensional, arbitrary zero-mean increment-distribution random walks, [Theorem \ref{momeN}, (\ref{eqA})] is already shown in [$\mathsection$ 23, Spitzer \cite{Sp76}],  combine Propositions 3 and 6 there. Our proof of uniform integrability of the sequence in (\ref{eqA}) in dimension $N=1$ is based upon this result, and is done in [(\ref{mom1}), Lemma \ref{lmomEK}]. Our proof of uniform integrability of the sequence in (\ref{eqB}) in dimension $N=1$ uses a maximal inequality, and is done in [(\ref{aimm}), Lemma \ref{lmomEK}].  The second step for deriving this Proposition comprises of the couplings in Lemma \ref{propcoup}. The reason we require this is that, unlike the continuous-time $N$-dimensional simple random walk, the coordinates of which are independent 1-dimensional simple random walks, see, for instance, Proposition 1.2.2 in \cite{LL10}, the coordinates of the discrete-time $N$-dimensional simple random walk are clearly dependent. These couplings allow to extend the uniform integrability results in Lemma \ref{lmomEK} to $N$ dimensions. These preparatory Lemmas  \ref{propcoup} and \ref{lmomEK} comprise Subsection \ref{prep}.

As noted already, the interconnection among [(\ref{Ftil}), Theorem \ref{momeN}] and [(\ref{Gtil}), Theorem \ref{momeN}] is due to that exit-times are times that new partial-maxima values are achieved for the random walk. The fact that these two limit theorems may be associated by coupling is already pointed out and exploited in the context of 1-dimensional random walks in [Theorem 3, $\mathsection$ 23, \cite{Sp76}]. We note that our proof approach in Proposition \ref{corBM} is facilitated by exploiting the corresponding coupling connection directly for the Brownian motion limiting objects.  Further, we note that the method of deriving Proposition \ref{propFG}, we mention below next, is also an instance of this approach in dimension one.

In Section \ref{EKsec}, we revisit Theorem \ref{EK}. In Subsection \ref{subEK}, we observe that two different routes for proving [(\ref{PHI}),  Theorem \ref{EK}] and [(\ref{GAM}),  Theorem \ref{EK}] are possible, due to their interconnection which we point out to in Proposition \ref{propFG} there. 
An easy consequence of Theorem \ref{EK} combined with Proposition \ref{propFG} is also derived there in Corollary \ref{propmax}.  This corollary is used later in the proof of Proposition \ref{corBM}. By means of Proposition \ref{propFG}, various different routes for deriving Theorem \ref{EK} are made available. We collect them together, along with various associated pointers to the literature, in Subsection \ref{routeM}.

In Subsection \ref{app}, we  show an elementary proof approach to Theorem \ref{EK} regarding zero-mean increment-distribution random walks. The first statement we give there is two-fold and provides, in [(\ref{sect}), Theorem \ref{tauT}], with the convergence of Laplace transforms corresponding to [(\ref{TAe}), Theorem \ref{EK}], and, in [(\ref{sigL}), Theorem \ref{tauT}] with the convergence of Laplace transforms corresponding to the so-called first-passage times.  We derive a proof of the Theorem \ref{EK}, from [(\ref{sect}), Theorem \ref{tauT}] in conjunction with Proposition \ref{propFG} by invoking some known facts from [10, p.\ 273, \cite{Sp76}].  Further, we derive in Corollary \ref{stable} the limit law of first-passage times as another direct byproduct. In this way, our approach brings together Theorem \ref{EK} with the celebrated stable law exponent 1/2 for first passage times. The remainder of Subsection \ref{app} is then devoted to devising an elementary proof of (both parts of) Theorem \ref{tauT}, which to our knowledge is not developed elsewhere (cf.\ Remark \ref{standLBM}), in order for our approach there to be elementary in its entirety.  This proof relies on the invariance principle and comprises of deriving the associated Laplace transforms in the simple random walk case, by building upon variants of known arguments relying on Doob\apost s Optional Stopping Theorem, cf.\ [$\mathsection$ 10.12, Williams \cite{W91}], along with a simplifying detour via Lemma \ref{ll2}, which in effect follows known arguments, cf.\ for instance, [Corollary 2.17, Kallenberg \cite{Kll97}]. By Lemma \ref{limlam}, which extends basic calculus results suggested in [$\mathsection$ 1.3, Lalley \cite{Ll}], the asymptotics of both these Laplace transforms are then derived there.

\section{The Erd\H os and Kac theorem revisited}\label{EKsec} 

\subsection{A closely related observation and a consequence of it}\label{subEK}

Let $\Phi(\cdot)$ and $\Gamma(\cdot)$ be as in Theorem \ref{EK}; these two distributions are associated as follows.

\begin{proposition}\label{propFG} \mbox{ } $\Phi(t) = 1- \Gamma\left( \frac{1}{ \sqrt{t}}\right)$, $t > 0$. 
\end{proposition}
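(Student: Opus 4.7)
The plan is to express the event $\{T < t\}$ in terms of the running maximum of $|W|$ on $[0,t]$, and then reduce this maximum to $M$ via Brownian scaling.

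First, I would use the almost-sure continuity of Brownian paths to rewrite $\Phi$ as a running-maximum probability. Since $T = \inf\{s : |W(s)| = 1\}$, path continuity gives $\{T \leq t\} = \{\sup_{0 \leq s \leq t} |W(s)| \geq 1\}$; because the distribution of $T$ (equivalently, of the running maximum of $|W|$ over a fixed interval) has no atoms, we may freely interchange $\leq$/$<$ and $\geq$/$>$. Hence
\[
\Phi(t) = \pr(T < t) = \pr\!\left(\sup_{0 \leq s \leq t} |W(s)| > 1\right).
\]

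Next I would invoke Brownian scaling: for fixed $t > 0$, the process $(W(ts)/\sqrt{t} : s \in [0,1])$ is again a standard linear Brownian motion. Consequently
\[
\sup_{0 \leq s \leq t} |W(s)| \; \stackrel{d}{=} \; \sqrt{t}\,\sup_{0 \leq u \leq 1} |W(u)| \; = \; \sqrt{t}\, M,
\]
and therefore
\[
\Phi(t) \;=\; \pr\!\left(\sqrt{t}\,M > 1\right) \;=\; \pr\!\left(M > \tfrac{1}{\sqrt{t}}\right) \;=\; 1 - \Gamma\!\left(\tfrac{1}{\sqrt{t}}\right),
\]
where in the last step I use once more that $M$ has a continuous distribution (so $\pr(M > x) = \pr(M \geq x) = 1 - \pr(M < x) = 1 - \Gamma(x)$).

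There is no real obstacle here; the only care needed is the bookkeeping of strict versus non-strict inequalities, which is handled once and for all by the atomlessness of the law of $M$ (equivalently, of $T$), itself an immediate consequence of the explicit density of the Brownian first-passage time to a level, or simply of path continuity together with the strong Markov property. The substantive content of the identity is therefore nothing more than the self-similarity of Brownian motion together with the tautological coupling between first-passage times and running maxima.
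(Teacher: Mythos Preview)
Your proof is correct and follows essentially the same approach as the paper's: both express $\{T<t\}$ via the running maximum of $|W|$ on $[0,t]$, apply Brownian scaling to reduce $\sup_{s\in[0,t]}|W(s)|$ to $\sqrt{t}\,M$, and finish using continuity of the distribution of $M$. The only cosmetic difference is that the paper isolates the scaling step as a separate lemma, while you fold it directly into the argument.
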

\begin{proof} Let $M(t) = \sup_{s \in [0,t]}|W(s)|$ and observe that the following equality holds
\begin{equation}\label{coupTM}
\{T < t\} = \{M(t) > 1\}.
\end{equation}
However, from the Brownian scaling property $W(s) \stackrel{d}{=} c  W(s/c^{2})$, for all $c>0$, we have the following. 
\begin{lemma}\label{MW} $M(t) \stackrel{d}{=} \sqrt{t} M(1).$
\end{lemma}
\begin{proof}[Proof of Lemma \ref{MW}]
By Brownian scaling, we have that
\begin{eqnarray*}\label{eq:}
M(t) & \stackrel{d}{=} &  c  \sup_{s \in [0,t]}|W(s/c^{2})|  \\
& = & c M(t/c^{2}), 
\end{eqnarray*}
therefore, plugging $c=\sqrt{t}$ in the display above proves the statement. 
\end{proof}
\noindent Note that $(\ref{coupTM})$ together with Lemma \ref{MW} give
\begin{eqnarray}\label{sc}
\Phi(t) := \pr(T < t ) & = & \pr(M(t) > 1) \nonumber \\
& = & \pr\left( M(1) > \frac{1}{ \sqrt{t}}\right) \nonumber \\
& = & 1- \Gamma\left( \frac{1}{ \sqrt{t}}\right), 
\end{eqnarray}
where in (\ref{sc}) we used continuity of $\Gamma\left( \cdot \right)$. The proof of Proposition\ref{propFG} is thus complete. 
\end{proof}

For ease of reference below, we record the following consequence of Theorem \ref{EK} by Proposition \ref{propFG}.  

\begin{corollary}\label{propmax} Let $M(t) = \sup_{s \in [0,t]}|W(s)|$ and let $\Gamma_{t}(x) = \pr(M(t)<x)$. We have that 
\[
\Gamma_{t}(x) = \Gamma\left(\frac{x}{\sqrt{t}}\right) = H\left(\frac{t}{x^{2}}\right),
\]
where $H$ is as in (\ref{eq00}), or (\ref{eq01}). 
\end{corollary}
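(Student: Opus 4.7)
The plan is to derive both equalities by chaining together Brownian scaling and Theorem \ref{EK}. The corollary amounts to nothing more than combining the scaling identity already extracted in Lemma \ref{MW} with the explicit formula \eqref{GAM} for $\Gamma$.

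First I would observe that, since $M$ in Theorem \ref{EK} is defined as $M = \sup_{t\in[0,1]}|W(t)| = M(1)$ in the notation of the corollary, Lemma \ref{MW} gives $M(t) \stackrel{d}{=} \sqrt{t}\, M(1) = \sqrt{t}\, M$. From this it is immediate that
\[
\Gamma_{t}(x) = \pr(M(t) < x) = \pr\!\left(M < \frac{x}{\sqrt{t}}\right) = \Gamma\!\left(\frac{x}{\sqrt{t}}\right),
\]
which is the first equality.

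For the second equality I would substitute $y := x/\sqrt{t}$ directly into \eqref{GAM} of Theorem \ref{EK}, which asserts $\Gamma(y) = H(1/y^{2})$. This yields $\Gamma(x/\sqrt{t}) = H(t/x^{2})$, completing the chain. The function $H$ may then be taken in either of the two forms \eqref{eq00} or \eqref{eq01}, by the equivalence stated in the remark following Proposition \ref{corBM}.

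There is no real obstacle here: the statement is a one-line corollary of Lemma \ref{MW} and \eqref{GAM}, and the whole argument is essentially a change of variable combined with an invocation of the Brownian scaling identity already proved. The only thing to be careful about is the matching of notation ($M$ versus $M(1)$) between Theorem \ref{EK} and the corollary, which is why I would record this identification explicitly at the start.
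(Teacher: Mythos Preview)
Your proposal is correct and follows exactly the same route as the paper, which simply states that the result follows from Lemma \ref{MW} combined with \eqref{GAM} in Theorem \ref{EK}. Your write-up merely unpacks this one-line proof, with the same ingredients in the same order.
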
 

\begin{proof}
This follows from Lemma \ref{MW} above combined with [(\ref{GAM}), Theorem \ref{EK}].  
\end{proof}

\subsection{A miscellany of proof approaches to Theorem \ref{EK}}\label{routeM}
In this section we comment and give pointers to the literature regarding various possible proof approaches to Theorem \ref{EK}. Limit theorems [(\ref{TAe}), Theorem \ref{EK}] and [(\ref{Be}), Theorem \ref{EK}] follow by applications of Donsker\apost s Theorem (cf.\ Lemma \ref{TMcon}, in the proof of Theorem \ref{momeN} below, which shows their higher-dimensional analogues).  Hence, we focus here on the remaining parts of the statement, [(\ref{PHI}), Theorem \ref{EK}] and [(\ref{GAM}), Theorem \ref{EK}]. Remarks \ref{SPR}, \ref{FKR}, and \ref{levrem} regard proof approaches to (\ref{PHI}) and to (\ref{GAM}), which we note that are interconnected via Proposition \ref{propFG}. In Remark \ref{EQR}, we comment on the equivalence of $H(\cdot)$ as in (\ref{eq01}) and as in (\ref{eq00}). 

\begin{remark}\label{SPR}\textup{Regarding deriving (\ref{PHI}) for $H(\cdot)$ as in (\ref{eq00}), one can follow [$\mathsection$ 21, Proposition 5, \cite{Sp76}], which we commented upon in the last paragraph of $\mathsection$ \ref{sub12}, along with (\ref{TAe}).}
\end{remark}

\begin{remark}\label{FKR}\textup{An approach leading to (\ref{GAM}) for $H(\cdot)$ as in (\ref{eq00}) is through the Feynman-Kac formulas for Brownian motion and showing a solution of the heat equation by use of the separation of variables technique; cf.\ the argument following [Theorem 7.45, M\"orters and Peres \cite{MP10}].}
\end{remark}

\begin{remark}\label{levrem}
\textup{Various proofs of L\'evy\apost s triple law, which we mentioned in the second paragraph of $\mathsection$ \ref{sub12} and which lead to (\ref{GAM}) for $H(\cdot)$ as in (\ref{eq01}), are follows. One approach goes through first deriving the associated law for the simple random walk (either by induction, or the technique of repeated reflections, aka method of images), and then using the classic central limit theorem and Donsker\apost s Theorem; cf.\ [Chpt. 2, $\mathsection$ 11.1, Billingsley \cite{Bl68}]. A closely related proof approach goes through arguments employing the said technique directly for standard Brownian motion instead, see for instance, [Theorem 8.7.3, Durrett \cite{Dr10}], or [$\mathsection$ 6.5, Schilling and Partzsch \cite{SR14}]. A different approach goes through the Feynman-Kac formulas for stochastic integrals and a method for verifying a solution to the heat equation, which may be motivated by heuristics, cf.\ [Theorem 7.45, M\"orters and Peres \cite{MP10}].}
\end{remark}

\begin{remark}\label{EQR}
\textup{For showing the equivalence of $H(\cdot)$ as in (\ref{eq01}) with that in (\ref{eq00}), one can combine the argument in  Remark \ref{FKR} with L\'evy\apost s triple law, for the various routes to which we refer to in Remark \ref{levrem}. Alternatively, through Proposition \ref{propFG}, one can employ the statement we infer Remark \ref{SPR}, along with L\'evy\apost s triple law. It is also possible to transform $H(\cdot)$ as in (\ref{eq01}) to $H(\cdot)$ as in (\ref{eq00}), see, for instance, [Lemma 11.6, Schilling and Partzsch \cite{SR14}], which relies on a Fourier expansion, or the references pointed out in the footnote in [p.\ 80, Billingsley \cite{Bl68}]. A different approach, which is connected to the proof of Theorem \ref{EK} below, is pointed out in [10, p. 273, \cite{Sp76}].}
\end{remark}
\subsection{An elementary proof approach to Theorem \ref{EK}}\label{app}

\begin{theorem}\label{tauT}
Let $\tau_{b}$, $T$, and $S_{t}$ be as in Theorem \ref{EK}. Let also $S = \inf\{ t: W(t) = 1\}$ and $\sigma_{b} = \inf\{t: S_{t} = b\}$, $b\geq1$. We have that
\begin{equation}\label{sect}
\lim_{b \rightarrow \infty} \E\big(e^{-\theta \frac{\tau_{b}}{b^{2}}}\big) = \E(e^{-\theta T})  = \sech{\sqrt{ 2 \theta}},  
\end{equation}
where $\sech\theta = (\cosh\theta)^{-1} = \frac{2}{e^{\theta} + e^{-\theta}}$, and
\begin{equation}\label{sigL} 
\lim_{b \rightarrow \infty} \E\big(e^{- \frac{\theta \sigma_{b}}{b^{2}}}\big) = \E(e^{-\theta S}) = \expo(-\sqrt{2 \theta}), 
\end{equation}
$\theta >0$.
\end{theorem}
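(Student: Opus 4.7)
The plan is to proceed in two movements: (i) establish the exact Laplace transform identities in the special case of the simple symmetric random walk (the two-valued $\pm 1$ case) by means of Doob's Optional Stopping Theorem, and (ii) promote these to the general zero-mean unit-variance statement by Donsker's invariance principle combined with bounded convergence.

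For the simple random walk step, fix $\lambda > 0$ and observe that $M_t := e^{\lambda S_t}(\cosh\lambda)^{-t}$ is a martingale. Stopping at $\tau_b \wedge t$ produces a uniformly bounded martingale (since $|S_{\tau_b \wedge t}| \leq b$), so letting $t \to \infty$ by bounded convergence gives $\E\bigl[e^{\lambda S_{\tau_b}}(\cosh\lambda)^{-\tau_b}\bigr] = 1$. Symmetry makes $S_{\tau_b}$ uniform on $\{-b,b\}$ independently of $\tau_b$, reducing this to $\E\bigl[(\cosh\lambda)^{-\tau_b}\bigr] = 1/\cosh(\lambda b)$. An analogous application at $\sigma_b$, where $S_{\sigma_b}=b$, yields $\E\bigl[(\cosh\lambda)^{-\sigma_b}\bigr] = e^{-\lambda b}$. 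Choosing $\lambda=\lambda_b$ so that $\cosh\lambda_b = e^{\theta/b^2}$ converts the left-hand sides into $\E[e^{-\theta\tau_b/b^2}]$ and $\E[e^{-\theta\sigma_b/b^2}]$; Taylor expansion of $\cosh$ gives $\lambda_b \sim \sqrt{2\theta}/b$, whence $\lambda_b b \to \sqrt{2\theta}$ as $b \to \infty$, producing the claimed limits $\sech\sqrt{2\theta}$ and $e^{-\sqrt{2\theta}}$ respectively. This asymptotic calculus step is presumably what the paper isolates as Lemma \ref{limlam}.

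For the general-distribution step, Donsker's theorem provides $b^{-1} S_{\lfloor b^2 \cdot \rfloor} \Rightarrow W(\cdot)$ in the Skorokhod topology. Because Brownian motion almost surely crosses any level transversally at the corresponding first hitting time, the exit-time functional from $(-1,1)$ and the first-passage functional at level $1$ are almost surely continuous at the Brownian limit, so continuous mapping delivers $\tau_b/b^2 \xrightarrow{d} T$ and $\sigma_b/b^2 \xrightarrow{d} S$. Since $t \mapsto e^{-\theta t}$ is bounded and continuous on $[0,\infty)$, bounded convergence upgrades this to the convergence of Laplace transforms stated in (\ref{sect}) and (\ref{sigL}). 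The identification of these limits with $\sech\sqrt{2\theta}$ and $e^{-\sqrt{2\theta}}$ then follows either by specializing the general convergence to the simple random walk (for which the right-hand sides were computed exactly in the previous paragraph), or, as a consistency check, by applying Optional Stopping directly to the Brownian exponential martingale $e^{\lambda W(t) - \lambda^2 t/2}$ at $T$ and at $S$, with the choice $\lambda = \sqrt{2\theta}$.

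The main obstacle I anticipate is the promotion step: a clean use of continuous mapping requires verifying that the exit-time and first-passage functionals are continuous at almost every Brownian sample path in the Skorokhod topology, a standard but nontrivial fact hinging on the oscillation of Brownian motion immediately after hitting levels. A secondary delicacy, relevant only when the common law of $\xi_i$ is continuous, is that $\{S_t = b\}$ has probability zero so $\sigma_b$ must in that case be read as $\inf\{t: S_t \geq b\}$; however, the overshoot over level $b$ is of order $O(1)$ and contributes negligibly under the $b^2$-rescaling, so the asymptotics are unaffected.
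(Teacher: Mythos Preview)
Your proposal is correct and follows essentially the same route as the paper: Doob's Optional Stopping applied to the exponential martingale $(\cosh\lambda)^{-t}e^{\lambda S_t}$ for the simple walk, the symmetry/independence observation for $S_{\tau_b}$ (the paper's Lemma \ref{ll1}), the asymptotic calibration $\lambda_b b\to\sqrt{2\theta}$ (exactly Lemma \ref{limlam}, as you guessed), and the appeal to Donsker plus bounded continuity of $e^{-\theta t}$ for the general increment law. The only cosmetic difference is that the paper reparametrizes via $z=\sech\theta$ and the auxiliary function $\lambda(z)=(1-\sqrt{1-z^2})/z$ before taking the limit, whereas you work directly with $\lambda_b$; and the paper relegates the continuous-mapping verification for $\tau_b/b^2$ and $\sigma_b/b^2$ to a citation rather than discussing transversal crossing as you do.
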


\begin{proof}[Proof of Theorem \ref{EK}.]
By Proposition \ref{propFG}, it suffices to show (\ref{PHI}) for $H(\cdot)$ as in (\ref{eq00}) and as in (\ref{eq01}). We may derive (\ref{PHI}) from (\ref{sect}) by invoking the following expansion in series of simple functions 
\begin{equation}\label{arrp1}
\sech {\sqrt{ 2 \theta}} = \frac{\pi}{2} \sum_{n = 0}^{\infty} (-1)^{n} \frac{2n+1}{\theta + \frac{\pi^{2}}{8}(2n+1)^{2}}, 
\end{equation}
or, alternatively, the geometric series representation as follows
\begin{equation}\label{sec2}
\sech\sqrt{ 2 \theta} = \frac{2 e^{-\sqrt{ 2 \theta}}}{1-(-1)e^{-2\sqrt{ 2 \theta}}} = 2 \sum_{k \geq 0} (-1)^{k} e^{-(2k+1)\sqrt{ 2 \theta}}.
\end{equation}
\noindent  The expansion in (\ref{arrp1}) leads to (\ref{PHI}) with $H(\cdot)$ in the form (\ref{eq00}), whereas (\ref{sec2}) leads (by term-to-term inversion) to it with $H(\cdot)$ in the form (\ref{eq01}); cf.\ [\cite{Sp76}, p. 273, 10]. 
\end{proof}

We state an immediate byproduct of Theorem \ref{tauT}.  Let $f_{S}(\cdot)$ denote the probability density associated to $S$.  
\begin{corollary}\label{stable} 
$\dis{ f_{S}(t) = \frac{1}{\sqrt{2\pi t^{3}}} \exp(-1/2t), t\geq 0.}$
\end{corollary}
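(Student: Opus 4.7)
The plan is to invert the Laplace transform identity $\E(e^{-\theta S}) = \expo(-\sqrt{2\theta})$ supplied by [(\ref{sigL}), Theorem \ref{tauT}]. Since $S \geq 0$, by the uniqueness theorem for Laplace transforms of positive measures it suffices to exhibit a probability density $f$ on $[0,\infty)$ whose Laplace transform agrees with $\expo(-\sqrt{2\theta})$ for every $\theta > 0$; such $f$ must then coincide with $f_{S}$ almost everywhere. The natural candidate, dictated by the statement of the corollary, is $f(t) = (2\pi t^{3})^{-1/2} \expo(-1/(2t))$.

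To verify this guess, I would change variables via $u = 1/\sqrt{t}$, which gives $t^{-3/2}\, dt = -2\, du$ and converts the integral to
\[
\int_{0}^{\infty} e^{-\theta t} f(t)\, dt = \frac{2}{\sqrt{2\pi}} \int_{0}^{\infty} \expo\!\left(-\tfrac{1}{2} u^{2} - \theta/u^{2}\right) du.
\]
The remaining integral is the special case $a = 1/2$, $b = \theta$ of the classical identity
\[
\int_{0}^{\infty} \expo(-a u^{2} - b/u^{2})\, du = \frac{\sqrt{\pi}}{2\sqrt{a}}\, \expo(-2\sqrt{a b}), \qquad a, b > 0,
\]
which evaluates to $\sqrt{\pi/2}\, \expo(-\sqrt{2\theta})$. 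Multiplying by the prefactor $2/\sqrt{2\pi}$ yields exactly $\expo(-\sqrt{2\theta})$, matching the Laplace transform of $S$ from Theorem \ref{tauT}.

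The only computation of substance is the classical integral displayed above. It can be handled by completing the square via the substitution $v = \sqrt{a}\, u - \sqrt{b/a}\, u^{-1}$, under which the exponent becomes $-(v^{2} + 2\sqrt{a b})$ and $dv = (\sqrt{a} + \sqrt{b/a}\, u^{-2})\, du$, so that the integral reduces (after splitting the range at the critical point and symmetrizing) to the standard Gaussian integral $\int_{-\infty}^{\infty} e^{-v^{2}}\, dv = \sqrt{\pi}$. Alternatively one may differentiate both sides in $b$ to obtain a first-order separable ODE with initial value at $b = 0$ equal to the Gaussian integral, and integrate. No substantive obstacle is anticipated; the argument is a guess-and-verify Laplace transform inversion.
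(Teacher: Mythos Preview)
Your proposal is correct and follows essentially the same route as the paper: both arguments start from the Laplace transform $\E(e^{-\theta S}) = \expo(-\sqrt{2\theta})$ in (\ref{sigL}) and invert it. The paper simply cites a textbook for the inversion, whereas you carry out the guess-and-verify computation explicitly via the substitution $u = 1/\sqrt{t}$ and the classical Gaussian-type integral; your calculation checks out and supplies the details the paper omits.
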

\begin{proof}[Proof of Corollary \ref{stable}.]
We appeal to the known fact that the Laplace transform in (\ref{sigL}) may be inverted; cf., for instance, [$\mathsection$ 9,  Chpt. 2, \cite{RW93}].
\end{proof}

\begin{proof}[Proof of Theorem \ref{tauT}.] The proofs of the left-hand-sides parts of (\ref{sect}) and (\ref{sigL}) are omitted 
since for positive random variables convergence of Laplace transforms is equivalent to convergence in distribution, and thus the former is equivalent to (\ref{TAe}), whereas the latter follows by an application of Donsker\apost s Theorem (see, for instance, [Example 8.6.6, \cite{Dr10}]). From this theorem we have that it thus suffices to show the remaining parts of the claim for the simple random walk, defined as follows. Let $S_{t} = \sum_{s=1}^{t} \zeta_{s}$, where $(\zeta_{i}: i \geq 1)$ uniformly distributed $\{-1,1\}$-valued independent random variables. 
\begin{proposition}\label{T1}
Let $\dis{\lambda(z) = \frac{1 - \sqrt{1-z^{2}}}{z}}$, $z \in (0,1)$. We have that
\begin{equation}\label{eqG3}
\E(z^{\tau_{b}}) = \frac{2}{\lambda^{b}(z) + \lambda^{-b}(z)}, 
\end{equation}
and that 
\begin{equation}\label{eqzT} 
\E(z^{\sigma_{b}}) = \lambda^{b}(z).
\end{equation}
\end{proposition}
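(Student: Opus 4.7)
The plan is to construct a one-parameter family of exponential martingales adapted to the simple random walk and then apply Doob's Optional Stopping Theorem at the stopping times $\sigma_b$ and $\tau_b$. The construction starts from requiring that, for some $\lambda >0$, the process $M_t := z^{t} \lambda^{S_{t}}$ be a martingale. Since $\E(\lambda^{\zeta_{t+1}}) = (\lambda + \lambda^{-1})/2$, this reduces to the algebraic condition $z(\lambda + \lambda^{-1}) = 2$, i.e., $z\lambda^{2} - 2\lambda + z = 0$. The two roots of this quadratic are $(1 \pm \sqrt{1-z^{2}})/z$; the smaller root is precisely $\lambda(z)$ as in the statement, and since their product equals $1$, the other root is $\lambda^{-1}(z)$. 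Thus both $M_t^{+} := z^{t}\lambda^{S_{t}}$ and $M_t^{-} := z^{t}\lambda^{-S_{t}}$ are nonnegative martingales with initial value $1$.

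For (\ref{eqzT}), I would apply Optional Stopping to $M_t^{-}$ at $\sigma_b$. On $\{t \leq \sigma_{b}\}$ one has $S_{t} \leq b$, so (since $\lambda < 1$ and thus $\lambda^{-1} > 1$) the stopped process enjoys the uniform bound $M^{-}_{t \wedge \sigma_{b}} \leq \lambda^{-b}$. Using $z \in (0,1)$ to get $z^{t} \to 0$ on $\{\sigma_{b} = \infty\}$, the stopped process converges almost surely to $z^{\sigma_{b}} \lambda^{-b} \mathbf{1}_{\{\sigma_{b} < \infty\}}$. Dominated convergence combined with $\E(M^{-}_{t \wedge \sigma_{b}}) = 1$ (which holds by Optional Stopping at the bounded stopping time $t \wedge \sigma_{b}$) then yields $\E(z^{\sigma_{b}})\lambda^{-b} = 1$, i.e., $\E(z^{\sigma_{b}}) = \lambda^{b}(z)$, under the usual convention $z^{\infty} := 0$.

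For (\ref{eqG3}), I would exploit the sign symmetry of $\tau_b$ by working with the symmetrized martingale $N_{t} := M_t^{+} + M_t^{-} = z^{t}(\lambda^{S_{t}} + \lambda^{-S_{t}})$, which satisfies $N_{0} = 2$. The function $x \mapsto \lambda^{x} + \lambda^{-x}$ is even and convex, so on $\{t \leq \tau_{b}\}$ one has the uniform bound $N_{t \wedge \tau_{b}} \leq \lambda^{b} + \lambda^{-b}$. Further, $\tau_{b} < \infty$ almost surely by recurrence of the symmetric walk, and $|S_{\tau_{b}}| = b$ forces $\lambda^{S_{\tau_{b}}} + \lambda^{-S_{\tau_{b}}} = \lambda^{b} + \lambda^{-b}$ independently of the exit side. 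Dominated convergence and $\E(N_{t \wedge \tau_{b}}) = 2$ then give $\E(z^{\tau_{b}})(\lambda^{b} + \lambda^{-b}) = 2$, which is (\ref{eqG3}).

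The only delicate point---mild in this setting---is that both $\sigma_{b}$ and $\tau_{b}$ are unbounded stopping times, so Optional Stopping is applied in its bounded form at $t \wedge \sigma_{b}$ (resp.\ $t \wedge \tau_{b}$) and then passed through the limit by dominated convergence; the uniform boundedness of the stopped martingales---secured by the choice of $M^{-}$ over $M^{+}$ in the first case and by symmetrization in the second---makes the passage routine.
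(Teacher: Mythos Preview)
Your argument is correct and follows the same core strategy as the paper: build the exponential martingale $z^{t}e^{\theta S_t}$ (equivalently $z^{t}\lambda^{\pm S_t}$), check the stopped process is uniformly bounded, and apply Optional Stopping. The parametrizations match under $z=\sech\theta$, $\lambda=e^{-\theta}$.

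The one genuine point of departure is in the treatment of $\tau_b$. The paper works with the single martingale $(\sech\theta)^{n}e^{\theta S_n}$ and, after Optional Stopping, factors $\E\big((\sech\theta)^{\tau_b}e^{\theta S_{\tau_b}}\big)$ by invoking a separate lemma that $S_{\tau_b}$ and $\tau_b$ are independent (proved there by symmetry), from which $\E(e^{\theta S_{\tau_b}})=\cosh(\theta b)$ and the result follows. You instead take the symmetrized martingale $N_t=z^{t}(\lambda^{S_t}+\lambda^{-S_t})$, so that the value at exit is deterministically $\lambda^{b}+\lambda^{-b}$ regardless of the exit side; this sidesteps the independence lemma entirely. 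Both routes are short, but yours is slightly more self-contained. For $\sigma_b$ you also handle the event $\{\sigma_b=\infty\}$ directly via $z^{t}\to 0$ and the uniform bound, whereas the paper appeals to a recurrence lemma to rule it out; either works.
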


\begin{proof}[Proof of Proposition \ref{T1}.] We require the two known statements following next. Their proofs are short and elegant and are given for completeness after the proof of Theorem \ref{tauT}. Note that Lemma \ref{ll2} regards recurrence of $(S_{t})$. 
  
\begin{lemma}\label{ll1}
$S_{\tau_{b}}$ and $\tau_{b}$ are mutually independent.
\end{lemma}

\begin{lemma}\label{ll2} $\dis{\tau_{b}<\infty}$ and $\dis{\sigma_{b}<\infty}$, for all $b$, a.s.. 
\end{lemma} 

\noindent Let $M_{n}^{\theta} = (\sech\theta)^{n} e^{\theta S_{n}}$, $\theta>0$. 
We have that $M_{n}^{\theta}$ is a product of independent, mean 1 random variables, and hence a martingale. By Lemma \ref{ll1} and Doob\apost s Optional Stopping Theorem (cf., for instance, [$\mathsection$ 10.10, Theorem (b), (ii), \cite{W91}]), whose hypotheses are satisfied due to that $|M_{n \wedge \tau_{b}}| \leq e^{\theta b}$, since $\sech\theta < 1$, and $\tau_{b}<\infty$ a.s., we have that 
\begin{equation}\label{eqta1}
\E(\sech \theta)^{\tau_{b}} = \sech(\theta b).
\end{equation}
By Doob\apost s Optional Stopping Theorem, whose hypotheses are again satisfied due to that $|M_{n \wedge \sigma_{b}}| \leq e^{\theta b}$, and, by Lemma \ref{ll2}, $\sigma_{b}<\infty$ a.s., we have that
\begin{equation}\label{eqs}
\E(\sech \theta)^{\sigma_{b}} = \exp(-\theta b).
\end{equation}
Setting $\sech \theta = z$ and since $z \in (0,1)$ and $\E(z)^{\sigma_{b}}, \E(z)^{\tau_{b}} \in [0,1]$, yields $\theta = \ln(1/\lambda(z))$; substituting this in (\ref{eqta1}) and in (\ref{eqs}) gives (\ref{eqG3}) and (\ref{eqzT}) respectively. Thus, the proof is complete. 
\end{proof} 

\begin{lemma}\label{limlam} Let $\lambda(z)$ be as in Proposition \ref{T1}. We have that
\begin{equation*}
\lim_{b \rightarrow \infty}[\lambda(e^{- \theta / b^{2}})]^{b} = e^{- \sqrt{2 \theta}}, \mbox{ } \theta>0. 
\end{equation*}
\end{lemma}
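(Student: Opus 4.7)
The plan is to reduce the claim to a standard Taylor expansion: take logarithms, so it suffices to show
\[
b\,\ln \lambda(e^{-\theta/b^{2}}) \;\longrightarrow\; -\sqrt{2\theta} \qquad \text{as } b \to \infty,
\]
and then expand each piece of $\ln \lambda(z) = \ln(1 - \sqrt{1-z^{2}}) - \ln z$ for $z = e^{-\theta/b^{2}}$ to the required order in $1/b$.

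First I would handle the ``outside'' factor $-\ln z$: for $z = e^{-\theta/b^{2}}$ one has $-\ln z = \theta/b^{2}$ exactly. Next, the essential step is to expand $1 - z^{2} = 1 - e^{-2\theta/b^{2}}$; the standard expansion $1 - e^{-x} = x - x^{2}/2 + O(x^{3})$ applied with $x = 2\theta/b^{2}$ gives $1 - z^{2} = 2\theta/b^{2} + O(1/b^{4})$, hence
\[
\sqrt{1-z^{2}} \;=\; \frac{\sqrt{2\theta}}{b} + O\!\left(\frac{1}{b^{3}}\right).
\]
Setting $u := \sqrt{1-z^{2}}$ and using $\ln(1-u) = -u - u^{2}/2 + O(u^{3})$, we get
\[
\ln(1-u) \;=\; -\frac{\sqrt{2\theta}}{b} - \frac{\theta}{b^{2}} + O\!\left(\frac{1}{b^{3}}\right).
\]
Combining these two expansions, the $\theta/b^{2}$ terms cancel and we obtain $\ln \lambda(z) = -\sqrt{2\theta}/b + O(1/b^{3})$, so $b\,\ln \lambda(z) \to -\sqrt{2\theta}$, which is what is needed.

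The only real care-points are: (i) keeping track of the $O$-terms to order $1/b^{3}$, because otherwise the cancellation between $-\theta/b^{2}$ (from $\ln(1-u)$) and $+\theta/b^{2}$ (from $-\ln z$) is obscured; and (ii) justifying that $\lambda(z) \in (0,1)$ for $z \in (0,1)$ close to $1$, so that the real logarithm is well defined (this is immediate from the definition of $\lambda$ and the fact that $0 < 1 - \sqrt{1-z^{2}} < z$ there). No genuine obstacle is expected; the argument is purely a scalar calculus exercise, consistent with the reference to \cite{Ll} pointed to in the paper.
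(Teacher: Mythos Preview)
Your proof is correct and follows the same basic strategy as the paper's --- an asymptotic expansion of $\lambda(z)$ near $z=1$ --- though the bookkeeping differs slightly. The paper works with $1-\lambda(z)$ directly, observing that $1-\lambda(z)\sim\sqrt{2}\sqrt{1-z}$ as $z\to 1^{-}$, and then applies the elementary limit $(1-a(b)/b)^{b}\to e^{-w}$ with $w=\lim_{b}a(b)=\sqrt{2\theta}$; this requires only the leading-order asymptotic, so the $\theta/b^{2}$ cancellation you track never arises. Your logarithm-based expansion is equally valid, but note that the cancellation you flag in point~(i) is in fact not needed for the conclusion: even an uncancelled $\pm\theta/b^{2}$ term would contribute only $O(1/b)\to 0$ after multiplying by $b$, so tracking terms to $O(1/b^{2})$ in $\ln\lambda(z)$ already suffices.
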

\begin{proof}[Proof of Lemma \ref{limlam}.] Observe that\footnote{as usual, we write $f(z) \sim g(z)$ as $z\rightarrow z_{o}$ to denote $\dis{\lim_{z \rightarrow z_{o}} \frac{f(z)}{g(z)} = 1}$.} $\dis{1-\lambda(z) \sim \sqrt{2}{\sqrt{1-z}}}$, as $z \rightarrow 1^{-}$, and hence
\begin{equation}\label{eqexp}
b \left(1 - \lambda(e^{- \frac{\theta}{b^{2}}})\right) \sim b\sqrt{2}{\sqrt{1-e^{-\frac{\theta}{b^{2}}}}}, \mbox{ as } b \rightarrow \infty,
\end{equation}
However, we have that 
\begin{equation}\label{eqlast}
\lim_{b \rightarrow \infty} b \sqrt{2} \sqrt{1-e^{- \frac{\theta}{b^{2}}}} = \sqrt{2 \theta},
\end{equation}
since,  by the Maclaurin series expansion, $\dis{1- e^{-x} = \sum_{k \geq 1}\frac{(-1)^{k-1} x^{k}}{k!}}$. Combining $(\ref{eqexp})$ and $(\ref{eqlast})$ gives
\[
\lim_{b \rightarrow \infty} \left(1- \left(1 -\lambda(e^{- \frac{\theta}{b^{2}}})\right)\right)^{b} = e^{-\sqrt{2  \theta}},
\]
due to that $\dis{\lim_{n \rightarrow \infty} \left( 1+\frac{a(n)}{n} \right)^{n}= e^{w}}$, $\dis{w = \lim_{n \rightarrow \infty} a(n)}$. The last display completes this proof. 
\end{proof}

\noindent To finish the proof, simply note that Lemma \ref{limlam} yields the right-hand-side equality in (\ref{sigL}) from (\ref{eqzT}), and 
that in (\ref{sect}) from (\ref{eqG3}) and an application of the algebraic limit theorem.  
\end{proof}

\begin{proof}[Proof of Lemma \ref{ll1}]
Note that 
\[
\pr(\tau_{b} = t, S_{\tau_{b}} = \pm b) = \pr(\tau_{b} = t) - \pr(\tau_{b} = t, S_{\tau_{b}} = \mp b),
\]
and hence, by symmetry, $\dis{\pr(\tau_{b} = t, S_{\tau_{b}} = \pm b) = \frac{1}{2} \pr(\tau_{b}= t)}$, as required. 
\end{proof}

\begin{proof}[Proof of Lemma \ref{ll2}] 
By Kolmogorov\apost s zero-one law (cf., for instance, [$\mathsection$ 4.11, Theorem, (ii), \cite{W91}]) we have that $\limsup_{n \rightarrow \infty} S_{n} = c$, a.s., where $c$ is a constant such that $c \in [-\infty, \infty]$. Clearly, it suffices to show that $c = +\infty$. Since also $\limsup_{n \rightarrow \infty} S_{n+1} = c$, a.s., we have that if $c<\infty$, then $\zeta_{1} = 0$ a.s., and hence, by contradiction, $|c| = \infty$.  Finally, if $c= - \infty$ a.s., then, by symmetry, $S_{n}' := -S_{n} \stackrel{d}{=} S_{n}$, and hence $\liminf_{n \rightarrow \infty}(S_{n}) = - \limsup_{n \rightarrow \infty}(-S_{n}) = +\infty$, which leads to the contradiction, $+\infty \leq -\infty$, and the proof is complete.
\end{proof}

\begin{remark}\label{standLBM}\textup{Other proofs of Theorem \ref{tauT} rely on Donsker\apost s theorem and typically deal directly with Brownian motion to derive first the right-hand-side equalities in (\ref{sect}) and in (\ref{sigL}). A proof of Theorem \ref{tauT} may be thus shown by relying on applications of Doob\apost s Optional Stopping Theorem for so-called exponential martingales, see for instance, [Proposition 3.7, Chpt.\ II, Revuz and Yor \cite{RY94}], or [Theorem 8.5.7, Durrett \cite{Dr10}]. Another, heuristic proof approach to (\ref{sigL}) can be found in [$\mathsection$ 13.7, Breiman \cite{Br92}].}
\end{remark}

\section{Proofs of main results}\label{31p}

\subsection{Proof of Proposition \ref{corBM}}

\begin{proof}[Proof of Proposition \ref{corBM}]
We let $({\bf W}_{t}: t \in \R_{+})$ be standard Brownian motion on $\R^{N}$ with covariance matrix $I$. We let also $M_{t} = \sup_{0 \leq s \leq t}|{\bf W}_{s}|$. From, for instance, Lemma 3.4.1 in \cite{LL10}, we have that, if we let $(W_{n, t}; n = 1, \dots, N)$ be standard mutually independent Brownian motions in $\R$, and if $v_{i} \in \R^{N}$, $i = 1, \dots, N$, then
\begin{equation}\label{lin}
{\bf W}'_{t} := \sum_{n=1}^{N} v_{n}W_{n,t}, 
\end{equation}
is Brownian motion in $\R^{N}$ with covariance matrix $\Sigma = VV^{T}$ and $V= [v_{1}, \dots, v_{N}]$. A standard consequence of (\ref{lin}) is that ${\bf W}_{t}$ is indeed of the form ${\bf W}_{t} = (W_{1,t}, \dots, W_{N,t})$.  Hence, letting 
$M_{t} = \sup_{0 \leq s \leq t}|{\bf W}_{s}|$ and $m_{n,t} = \sup_{s \in [0,t]}|W_{n,t}|$, we have that $(m_{n,t}: n = 1, \dots, N)$ are independent and identically distributed according to $\Gamma_{t}$ in Corollary \ref{propmax}. Note that, this gives that
\begin{eqnarray}\label{finbm}
\pr(M_{t} < x) &=& \prod_{n=1}^{N} \pr(m_{n,t} < x) \nonumber \\
& =& \left(H\left(\frac{t}{x^{2}}\right)\right)^{N}.
\end{eqnarray}
We prove (\ref{Gtil}). Recall that $\wdh{\bf{W}}_{t}$ 
is $N$-dimensional Brownian motion with covariance matrix $\frac{1}{N}I$. Hence, another application of (\ref{lin}) gives that
\begin{equation}\label{eqWD}
\wdh{\bf{W}}_{t} \stackrel{d}{=} \frac{1}{N} {\bf W}_{t},
\end{equation}
and in particular, that $\wdh{M} \stackrel{d}{=} \frac{1}{N} M_{1}$, which from (\ref{finbm}) gives  (\ref{Gtil}). 

We prove (\ref{Ftil}). We let $\wdh{M}_{t} = \sup_{0 \leq s \leq t}|{\bf \wdh{W}}_{s}|$. Note that  
\begin{eqnarray}\label{eqmt}
\pr(\wdh{T}<t)&=&\pr(\wdh{M}_{t} >1) \nonumber \\
& = & \pr\left(\frac{1}{N} M_{t} >1\right) \nonumber \\
& = & 1- \pr\left( \frac{1}{N} M_{t} <1\right),
\end{eqnarray}
where the first equality follows by considering the corresponding equality of events, the second one comes from applying (\ref{eqWD}), and in the third one we use continuity of $M_{t}$. Therefore, from (\ref{finbm}) and (\ref{eqmt}), we derive (\ref{Ftil}), which completes the proof. 
\end{proof}

\subsection{Proof of Theorem \ref{momeN}}

Prior to giving the proof of Theorem \ref{momeN} in paragraph \ref{subthmprf} below, we first focus on deriving the result stated next which is required and plays a key role in that proof. Recall first the definitions of $\til{T}_{N, r}$ and $\til{M}_{N, t}$ from (\ref{defTtil}) and (\ref{defMtil}) respectively. 
We note that, henceforth, since the dimension $N \geq 1$ will be a fixed, finite integer which will be implicit from context, we simply write $\til{T}_{r}$ and $\til{M}_{t}$ respectively instead, dropping the subscript associated with the dimension, in order to simplify notation.

\begin{proposition}\label{uin} 
We have that
\begin{equation}\label{intA}
\left\{ \left( \frac{\til{T}_{r}^{p}}{r^{2p}},  r \geq 1\right) \right\}  \mbox{ is uniformly integrable for all } p\geq 1,
\end{equation}
and further, that 
\begin{equation}\label{intB}
\left\{  \left( \frac{\til{M}_{t}^{p}}{t^{p/2}}, t \geq 1\right)  \right\}  \mbox{ is uniformly integrable for all } p\geq 1.
\end{equation}
\end{proposition}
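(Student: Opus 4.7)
The plan is to carry out the two-step approach indicated in the outline of proofs, first establishing both uniform integrability claims in dimension $N=1$ and then promoting them to arbitrary $N$ by a direct coupling of the coordinate processes of $\Ze_t$ to independent one-dimensional walks.

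In one dimension, for (\ref{intA}) I would combine the weak convergence $\til{T}_{1,r}/r^{2} \xrightarrow{d} T$ given by [(\ref{TAe}), Theorem \ref{EK}] with the convergence of $p$-th moments already established in [$\mathsection$ 23, combining Propositions 3 and 6, Spitzer \cite{Sp76}]; since for nonnegative random variables distributional convergence together with convergence of $p$-th moments to a finite limit is equivalent to uniform integrability of the $p$-th powers, the claim in (\ref{intA}) follows at once in this case. For (\ref{intB}) in one dimension I would instead derive $L^{q}$-boundedness for every $q>1$ by applying Doob's $L^{q}$ maximal inequality to the martingale $S_t$ to obtain $\E\, m_t^{q}\le (q/(q-1))^{q}\E|S_t|^{q}$, and then invoking the standard polynomial moment bound $\E|S_t|^{q}\le C_{q}\, t^{q/2}$; $L^{q}$-boundedness for some $q>p$ then implies uniform integrability of $(m_t/\sqrt{t})^{p}$ by a routine Markov-truncation argument.

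The main obstacle in passing to general $N$ is that, in contrast with the continuous-time $N$-dimensional simple random walk, the coordinates of the discrete-time process $\Ze_t$ are not independent one-dimensional walks. The natural remedy is the subordination coupling obtained by decomposing $\boldsymbol{\omega}_{s}$ into an independent pair $(I_{s},\varepsilon_{s})$, with $I_{s}\in\{1,\ldots,N\}$ the chosen coordinate and $\varepsilon_{s}\in\{-1,+1\}$ the chosen sign. This produces the representation
\begin{equation*}
Z_t^{(i)} = Y^{(i)}_{K_t^{(i)}}, \qquad K_t^{(i)} := |\{s\le t:I_s=i\}|,
\end{equation*}
where $(Y^{(i)})_{i=1}^{N}$ are mutually independent standard one-dimensional simple random walks, independent also of $(I_s)$. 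Writing $\tau_r^{(i)}$ for the exit time of $Y^{(i)}$ from $[-r,r]$, $(G_j^{(i)})_{j\ge 1}$ for the i.i.d.\ Geometric$(1/N)$ waiting times between consecutive visits of $I_\cdot$ to $i$, and $m_t^{(i)}=\max_{j\le t}|Y_j^{(i)}|$, one reads off the pointwise comparisons
\begin{equation*}
\til{T}_{r} \;\le\; \sum_{j=1}^{\tau_r^{(1)}} G_j^{(1)}, \qquad \til{M}_{t} \;\le\; \max_{1\le i\le N} m^{(i)}_{t}.
\end{equation*}

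To turn these dominations into uniform integrability, the plan is to establish $L^{q}$-boundedness of the dominating quantities for some $q>p$, which suffices since domination by an $L^{q}$-bounded family with $q>1$ yields uniform integrability. For the maxima, the union bound $\E\bigl(\max_i m^{(i)}_{t}\bigr)^{q}\le N\,\E(m^{(1)}_t)^{q}$ reduces matters immediately to the one-dimensional estimate of the preceding step. For the exit times, I would condition on $\tau_r^{(1)}$, exploit its independence from $(G_j^{(1)})$, and apply the elementary Marcinkiewicz-type bound $\E(\sum_{j=1}^{n}G_j)^{q}\le C_{q}\,n^{q}$ for sums of i.i.d.\ geometrics with finite moments of all orders to obtain
\begin{equation*}
\E\Bigl(\sum_{j=1}^{\tau_r^{(1)}} G_j^{(1)}\Bigr)^{q} \le C_{q}\,\E(\tau_r^{(1)})^{q},
\end{equation*}
after which Spitzer's one-dimensional moment bound from the first step closes the argument. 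I anticipate this random-geometric-sum moment estimate to be the main technical point, since Wald's identity alone yields only the first moment; the remaining ingredients are essentially routine.
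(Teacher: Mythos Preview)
Your proposal is correct and follows the same two-step architecture as the paper (one-dimensional input plus a subordination coupling), but the implementation of each step differs. For the one-dimensional exit times you and the paper do exactly the same thing (Spitzer's moment convergence plus the ``reverse Vitali'' criterion). For the one-dimensional maxima the paper does \emph{not} use Doob: it invokes a Billingsley maximal inequality $\pr(m_t/\sqrt t\ge x)\le 2\,\pr(2|S_t|/\sqrt t\ge x)$ and then shows uniform integrability of $(|S_t|/\sqrt t)^p$ via CLT moment convergence; your Doob-$L^q$ route is more direct and avoids that detour. The more substantive difference is the exit-time coupling: the paper observes that at time $\til{T}_r$ the intrinsic clocks satisfy $K_{j,\til{T}_r}\le \tau_{j,r}$ for every $j$, whence simply $\til{T}_r=\sum_j K_{j,\til{T}_r}\le \sum_{j=1}^N \tau_{j,r}$, and then passes to uniform integrability via a closure lemma (``if each $X_{n,k}^p$ is UI for all $p$, then so is $(\sum_n X_{n,k})^p$''). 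Your bound $\til{T}_r\le \sum_{j\le \tau_r^{(1)}}G_j^{(1)}$ is also valid, but the paper's inequality sidesteps the random-index geometric sum entirely and hence the moment estimate you flag as the main technical point; conversely, your $L^q$-boundedness mechanism (Minkowski on $\sum_{j\le n}G_j$, then condition on $\tau_r^{(1)}$) is lighter than the paper's closure lemma, which needs UI of the summands for \emph{every} power. Either combination works; the paper's coupling is cleaner, your UI transfer is cleaner.
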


The proof of Proposition \ref{uin} is postponed to  \ref{subsecunifint}, whereas in the subsequent  \ref{prep} we state and prove two Lemmas that we require in the proof of Proposition \ref{uin}. 
 
\subsubsection{Two preparatory Lemmas} \label{prep}
In this section we state first and prove below next two preparatory results that we invoke later, in the proof of Proposition \ref{uin}. Lemma \ref{propcoup} given next regards a coupling connection among the simple $N$-dimensional random walk and a collection of $N$ independent one-dimensional simple random walks.  The second one regards uniform integrability of the sequences considered in Proposition \ref{uin} in one dimension only. To state Lemma \ref{propcoup} next recall that we let $(\Ze_{t}: t \geq 0)$  be the simple $N$-dimensional random walk.  The proof of this Lemma given here invokes a simple realization associated to its description in (\ref{eq2}), see for instance, [$\mathsection$ 1.2.4, \cite{St05}].

\begin{lemma}\label{propcoup}
Let $\{(Z_{n, t}, t \geq 1): n = 1, \dots, N\}$ be a collection of independent simple one-dimensional random walks. Let $\tau_{n,r} = \inf\{t: |Z_{n, t}| > r\}$, where $n=1, \dots, N$. Let also $m_{n,t} = \max_{1 \leq s \leq t}|Z_{n,s}|$. We have that we may define $(\Ze_{t}: t \geq 0)$ on the same probability space with $\{(Z_{n, t}, t \geq 1): n = 1, \dots, N\}$, such that 
\begin{equation}\label{coupl1}
\til{T}_{r} \leq \sum_{n=1}^{N} \tau_{n,r}- (N-1);
\end{equation}
and, such that $\til{M}_{t} \leq \max_{n=1, \dots, N} m_{n,t}$, and, a fortiori,  
\begin{equation}\label{coupl2}
\til{M}_{t} \leq \sum_{n=1}^{N} m_{n,t},   
\end{equation}
almost surely. 
\end{lemma}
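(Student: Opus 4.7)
The plan is to use the standard representation of the simple $N$-dimensional walk as a ``coordinate-picking'' construction, cf.\ [$\mathsection$ 1.2.4, \cite{St05}]. Namely, I would introduce i.i.d.\ random variables $(\xi_s)_{s\geq 1}$ uniform on $\{1,\dots,N\}$ and, independently of these, an array $(\eta_{n,k})_{n \leq N,\, k \geq 1}$ of i.i.d.\ uniform $\{-1,+1\}$-valued random variables. Setting $Z_{n,k} = \sum_{j=1}^{k}\eta_{n,j}$ produces the $N$ required independent one-dimensional simple walks, and writing $L_n(t) = |\{s \leq t : \xi_s = n\}|$ for the coordinate-selection counts, the process
\[
\Ze_t := \bigl( Z_{1,L_1(t)},\, Z_{2,L_2(t)},\, \dots,\, Z_{N,L_N(t)} \bigr)
\]
has the distribution prescribed in (\ref{eq2}); checking this transition probability computation is immediate because at each step exactly one coordinate is incremented by an independent $\pm 1$, each of the $2N$ outcomes having probability $1/(2N)$.

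Given this joint realization, I would prove (\ref{coupl1}) by a direct counting argument. Since $L_n(\cdot)$ is nondecreasing, the event $\{\til{T}_r > t\}$ coincides with $\{|Z_{n,L_n(s)}| \leq r \text{ for all } s \leq t,\, n \leq N\}$, which in turn is equivalent to $L_n(t) \leq \tau_{n,r}-1$ for every $n = 1,\dots,N$. Summing over $n$ and using the identity $\sum_{n=1}^{N} L_n(t) = t$ yields $t \leq \sum_{n=1}^{N}\tau_{n,r} - N$. Contrapositively, once $t \geq \sum_{n=1}^{N}\tau_{n,r} - (N-1)$ the walk must have exited, which is exactly (\ref{coupl1}).

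For (\ref{coupl2}), I would exploit that $|\cdot|$ denotes the $L^\infty$-norm, so $|\Ze_s| = \max_{n}|Z_{n,L_n(s)}|$. Interchanging maxima and using $L_n(t)\leq t$ gives
\[
\til{M}_t \;=\; \max_{1 \leq s \leq t}\max_{n \leq N}|Z_{n,L_n(s)}| \;\leq\; \max_{n \leq N}\max_{0 \leq k \leq L_n(t)}|Z_{n,k}| \;\leq\; \max_{n \leq N} m_{n,t},
\]
and the weaker bound $\til{M}_t \leq \sum_{n=1}^{N} m_{n,t}$ follows since each $m_{n,t} \geq 0$.

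There is no real obstacle here: once the coordinate-picking coupling is written down, both statements reduce to elementary monotonicity/counting arguments. The only point that deserves some care is verifying that the process $\Ze_t$ defined via $(\xi_s,\eta_{n,k})$ does indeed have the transition probabilities (\ref{eq2}) and that the marginal processes $Z_{n,\cdot}$ remain, jointly, $N$ independent simple one-dimensional walks; both follow from the mutual independence of the collections $(\xi_s)$ and $(\eta_{n,k})$.
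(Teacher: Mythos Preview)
Your proposal is correct and follows essentially the same approach as the paper: the coordinate-picking coupling you describe (with $\xi_s$, $\eta_{n,k}$, $L_n(t)$) is exactly the construction the paper uses (there denoted $d_s$, $K_{n,t}$), and your argument for (\ref{coupl2}) is identical. For (\ref{coupl1}) your contrapositive counting argument via $\sum_n L_n(t)=t$ is a slight repackaging of the paper's version, which instead conditions on the exiting coordinate $\delta=i$ and writes $\til{T}_r = \tau_{i,r} + \sum_{j\neq i} K_{j,\til{T}_r}$ with $K_{j,\til{T}_r}\le \tau_{j,r}-1$; the underlying inequality is the same, and your formulation is arguably a touch cleaner since it avoids splitting into cases.
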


\begin{lemma}\label{lmomEK} Let $S_{t} = \sum_{s=1}^{t} \zeta_{s}$ be the simple one-dimensional random walk. Furthermore, we let $\tau_{b} = \inf\{ t: |S_{t}| \geq b\}$ and $m_{t} = \max\{|S_{1}|, \dots , |S_{t}|\}$. We have that
\begin{equation}\label{mom1}
\left\{ \left(\frac{\tau_{b}}{b^{2}}\right)^{p}, b \geq 1\right\} \mbox{ is uniformly integrable for all } p\in \Na,
\end{equation}
and that 
\begin{equation}\label{aimm}
\left\{\left(\frac{m_{t}^{r}}{t^{r/2}}\right), t \geq 1\right\} \mbox{ is uniformly integrable for all } r \in \Na,
\end{equation}
\end{lemma}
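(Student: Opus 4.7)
The plan is to treat the two claims separately, since the author flags two different strategies for them.

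For (\ref{mom1}), I would invoke the result of Spitzer [$\mathsection$ 23, Propositions 3 and 6, \cite{Sp76}], cited in the discussion preceding the Lemma, which establishes in one dimension the full convergence of moments $\E(\tau_b/b^2)^q \to \E T^q$ as $b \to \infty$ for every $q \geq 1$. Since $T$ is the exit time of linear Brownian motion from $(-1,1)$, it admits finite moments of every order (immediate, for instance, from the exponential tail bound for $\pr(T > t)$, or directly from (\ref{PHI})). Applying the cited convergence with $q = p+1$ yields
\begin{equation*}
\sup_{b \geq 1} \E \left(\frac{\tau_b}{b^2}\right)^{p+1} < \infty,
\end{equation*}
since a convergent sequence of reals is bounded. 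Rewriting this as $\sup_b \E\big[\big((\tau_b/b^2)^p\big)^{(p+1)/p}\big]<\infty$, with exponent $(p+1)/p > 1$, the standard criterion that $L^{1+\varepsilon}$-boundedness (for some $\varepsilon > 0$) implies uniform integrability furnishes (\ref{mom1}).

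For (\ref{aimm}), I would rely on Doob's $L^q$ maximal inequality applied to the martingale $(S_t)$: for any $q > 1$,
\begin{equation*}
\E[m_t^q] \leq \left(\frac{q}{q-1}\right)^q \E[|S_t|^q].
\end{equation*}
Choosing $q = 2r$, which is at least $2$ for $r \in \Na$, and using the elementary even-moment estimate $\E[S_t^{2r}] \leq C_r t^r$ — straightforward for the Rademacher sum $S_t$ by combinatorial expansion of $\E S_t^{2r}$, or, alternatively, via Khintchine's or Marcinkiewicz--Zygmund's inequalities — one obtains
\begin{equation*}
\sup_{t \geq 1} \E \left[ \left(\frac{m_t^r}{t^{r/2}}\right)^{\!2} \right] < \infty.
\end{equation*}
This $L^2$-boundedness yields (\ref{aimm}) by the same uniform-integrability criterion.

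I do not anticipate a serious obstacle: the genuine content lies in the external inputs — Spitzer's one-dimensional moment-convergence result for (\ref{mom1}), and the pairing of Doob's maximal inequality with the Rademacher even-moment bound for (\ref{aimm}) — both of which are available off the shelf. The only point requiring attention is that the Spitzer input must be used at order $p+1$ (not merely $p$), so that genuine $L^{1+\varepsilon}$-boundedness, and not just $L^1$-boundedness, of $(\tau_b/b^2)^p$ is produced.
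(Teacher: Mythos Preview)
Your argument is correct, but it proceeds differently from the paper in both parts. The paper's route is via the ``reverse'' direction of the Billingsley lemma (if $X_n \xrightarrow{d} X$, $\E X_n \to \E X$, and $\E X < \infty$, then $(X_n)$ is uniformly integrable): for (\ref{mom1}) it invokes Spitzer's moment convergence at order exactly $p$, together with the distributional convergence $(\tau_b/b^2)^p \xrightarrow{d} T^p$ and finiteness of $\E T^p$; for (\ref{aimm}) it dominates the tail of $m_t/\sqrt{t}$ by that of $2|S_t|/\sqrt{t}$ via a L\'evy-type maximal inequality from \cite{Bl68}, and then checks the same Billingsley criterion for $(|S_t|/\sqrt{t})^p$ using the CLT and known convergence of absolute moments. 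By contrast, you bypass the reverse-Billingsley machinery entirely and use the simpler $L^{1+\varepsilon}$-boundedness criterion: for (\ref{mom1}) you apply Spitzer at order $p+1$ to get a uniform $(p+1)$-st moment bound, and for (\ref{aimm}) you pair Doob's $L^{2r}$ maximal inequality with the Rademacher even-moment estimate to get uniform $L^2$-boundedness directly. Your approach is shorter and more elementary; the paper's has the minor conceptual advantage of using Spitzer's result only at the exact order $p$ needed, and of exhibiting the limit constants explicitly along the way.
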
 

\begin{proof}[Proof of Lemma \ref{propcoup}.]
We obtain a realization of $(\Ze_{t}: t \geq 1)$ as follows. The value of $\Ze_{t+1}- \Ze_{t}$ is decided by first choosing one of the $N$ coordinates uniformly at random, and then deciding whether it is to be $\{+1,-1\}$. With this in mind, let $(d_{s}, s \geq 1)$ be a $\{1, \dots, N\}-$valued independent and uniformly distributed collection of random variables, which is independent of $\{(Z_{n, t}, t \geq 1): n = 1, \dots, N\}$. Letting $K_{n,t} = \sum_{s = 1}^{t} I(d_{s} = n)$,  $n = 1, \dots, N$, we have that  
\begin{equation}\label{repr}
\Ze_{t} =  (Z_{1, K_{1,t}}, \dots , Z_{N, K_{N,t}}).
\end{equation}

\noindent Let $\delta = \{i: |\Ze_{\til{T}_{r}}(i)|>r\}$, $\Ze_{t}(i) := Z_{i,t}$. From (\ref{repr}) we have that 
\begin{equation}\label{Trt}
\til{T}_{r} = \tau_{i,r}+ \sum_{j: j \not= i } K_{j, \til{T}_{r}} \mbox{ on } \{\delta = i\}  
\end{equation}
for all $i \not= j$; however, we have by the definition of $\delta$, that
\begin{equation}\label{eqKj}
 K_{j, \til{T}_{r}} \leq \tau_{j, r}-1 \mbox{ on } \{\delta = i\}, 
\end{equation}
for all $i \not= j$. Therefore, (\ref{Trt}) and (\ref{eqKj}) imply (\ref{coupl1}).

Let $(\Se_{t}: t \geq 1)$ be the concatenation $\Se_{t} = (Z_{1, t}, \dots, Z_{N, t})$. Let $M_{t}= \max_{1 \leq s \leq t}|\Se_{s}|$ and let also $\til{m}_{n,t} = \max_{1 \leq s \leq t}|Z_{n,K_{n,s}}| $. From (\ref{repr}) because $K_{n,t} \leq t$, for all $n$ and $t$, we have that, for every $n$,
\[
\til{m}_{n,t} \leq  m_{n,t} \mbox{ for all } t, 
\]
almost surely, and hence 
\[
\til{M}_{t}:= \max_{n=1, \dots, N} \til{m}_{n,t} \leq \max_{n=1, \dots, N} m_{n,t} =: M_{t}. 
\]
which proves the statement preceding  (\ref{coupl2}). Hence the proof is complete. 
\end{proof}

\begin{proof}[Proof of Lemma \ref{lmomEK}.] For both parts we will verify the assumptions of the following general result, which corresponds to the reverse part of [Theorem 5.4, Chpt.\ 1, \cite{Bl68}]. 

\begin{lemma}\label{Billem}
Let $(X_{n}; n \geq0)$ be a sequence of non-negative and integrable random variables. Suppose that $X_{n}  \xrightarrow{d} X$ and that $\E(X_{n}) \rightarrow \E(X)$, as $n \rightarrow \infty$, as well as that $\E(X)<\infty$.  Then $(X_{n}; n \geq0)$ are uniformly integrable. 
\end{lemma}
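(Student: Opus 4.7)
The plan is to verify the standard uniform-integrability criterion
\[
\lim_{K \to \infty} \sup_{n} \E\big[X_n \, I(X_n > K)\big] = 0
\]
by combining convergence of means with convergence in distribution. The key device is the bounded continuous truncation $g_K(x) := x \wedge K$: since $X_n \xrightarrow{d} X$, the Portmanteau theorem gives $\E[g_K(X_n)] \to \E[g_K(X)]$ for each fixed $K>0$. Because the $X_n$ are non-negative, we have the pointwise decomposition $X_n = g_K(X_n) + (X_n-K)^{+}$, and subtracting from the hypothesis $\E[X_n]\to \E[X]$ I then obtain, for each fixed $K>0$,
\[
\E[(X_n-K)^{+}] \to \E[(X-K)^{+}] \quad \mbox{as } n \to \infty.
\]
Moreover, since $(X-K)^{+} \leq X \in L^{1}$ and $(X-K)^{+} \downarrow 0$ pointwise as $K\to\infty$, dominated convergence gives $\E[(X-K)^{+}]\to 0$ as $K \to \infty$.

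It remains to pass from pointwise (in $n$) convergence of the tail integrals $\E[(X_n-K)^{+}]$ to uniformity in $n$. Given $\epsilon>0$, I would first fix $K$ so large that $\E[(X-K)^{+}]<\epsilon/2$, then choose $N_{0}$ so that $\E[(X_n-K)^{+}]<\epsilon$ for all $n \geq N_{0}$. Combined with the elementary pointwise bound $x\cdot I(x > 2K) \leq 2(x-K)^{+}$, valid for all $x\geq 0$, this yields $\E[X_n\, I(X_n>2K)] < 2\epsilon$ for every $n\geq N_{0}$. The remaining finitely many indices $n<N_{0}$ are handled by the elementary observation that any finite collection of integrable random variables is uniformly integrable; thus one may enlarge the threshold from $2K$ to some $L\geq 2K$ so as to secure $\E[X_n\, I(X_n>L)] < 2\epsilon$ uniformly in all $n$. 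Letting $\epsilon \downarrow 0$ concludes uniform integrability.

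The only substantive obstacle is this `compactness-style' splitting between tail and initial indices in $n$, which converts a family of pointwise-in-$n$ limits into the required uniform-in-$n$ tail bound; all other steps are immediate from Portmanteau's theorem applied to $g_K$ and from dominated convergence applied to $(X-K)^{+}$.
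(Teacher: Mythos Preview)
Your argument is correct. The decomposition $x = (x\wedge K) + (x-K)^{+}$, the use of Portmanteau on the bounded continuous truncation $g_{K}$, the subtraction to obtain $\E[(X_{n}-K)^{+}]\to\E[(X-K)^{+}]$, and the tail--initial splitting over $n$ are all valid; the inequality $x\, I(x>2K)\le 2(x-K)^{+}$ is an easy check, and the appeal to uniform integrability of a finite family handles the residual indices.

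By way of comparison: the paper does not actually prove this lemma. It is stated there as a quotation of the reverse implication in Billingsley's Theorem~5.4 (Chapter~1 of \cite{Bl68}) and is used as a black box. What you have written is essentially the standard proof of that reverse implication, so your approach is not ``different'' from the paper's so much as it \emph{supplies} the argument the paper outsources. The only stylistic remark is that one can shorten the endgame slightly by working directly with $\E[(X_{n}-K)^{+}]$ as the uniform-integrability functional (since $\sup_{n}\E[(X_{n}-K)^{+}]\to 0$ as $K\to\infty$ is itself an equivalent criterion for uniform integrability of nonnegative variables), thereby avoiding the auxiliary inequality $x\, I(x>2K)\le 2(x-K)^{+}$; but this is cosmetic.
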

\noindent From Theorem \ref{EK} the continuous mapping theorem yields 
\begin{equation}\label{Td}
\left(\frac{\tau_{b}}{b^{2}}\right)^{p} \xrightarrow{d} T^{p} \hs \mbox{ as } b \rightarrow \infty,
\end{equation}
where the distribution of $T^{p}$ is such that $\dis{\Phi_{p}(x) = \pr(T^{p} \leq x) = 1-  H(x^{1/p})}$,  and $H$ is as in (\ref{eq00}). 
From a simple computation from the density function of $T^{p}$, we have that 
\begin{equation*}\label{momEK3}
\E(T^{p}) = p! \cdot \frac{\pi}{2} \left(\frac{8}{\pi^{2}}\right)^{(p+1)} \sum_{k=0}^{\infty} (-1)^{k}\left(\frac{1}{2k+1}\right)^{2p+1},
\end{equation*} 
cf.\  [Proposition 3, $\mathsection$ 23, \cite{Sp76}], and hence 
\begin{equation}\label{Cbil1}
\E(T^{p})<\infty. 
\end{equation}
Furthermore,  if $L(\theta)= \E(e^{-\theta \frac{\tau_{b}}{b^{2}}})$, letting $L^{(p)}(\theta)$ denoting its derivative of the $p$-th order, we have 
\[
\E\left(\frac{\tau_{b}}{b^{2}}\right)^{p} = (-1)^{p} \cdot \lim_{\theta \rightarrow 0} L^{(p)}(\theta)  
\]
and thus, from [(\ref{eqG3}), Proposition \ref{T1}] we get that the integrability condition of Lemma \ref{Billem} is granted, which is that
\begin{equation}\label{Cbil2}
\E\left(\frac{\tau_{b}}{b^{2}}\right)^{p}<\infty, 
\end{equation}
for all $b \geq 1$. In addition, invoking [Propositions 3 and 6, $\mathsection$ 23, \cite{Sp76}] we have that 
\begin{equation}\label{Te}
\E\left(\frac{\tau_{b}}{b^{2}}\right)^{p}  \rightarrow \E(T^{p}), \mbox{ as } b \rightarrow \infty. 
\end{equation}
Thus, from (\ref{Td}), (\ref{Cbil1}), (\ref{Cbil2}), and (\ref{Te}) the assumptions of Lemma \ref{Billem} are satisfied, hence yielding (\ref{mom1}).  

For the second part, invoking the Lemma in [p.\ 69, \cite{Bl68}] gives
\begin{equation}\label{bilmax}
\pr\left(\frac{m_{t}}{\sqrt{t}} \geq x\right) \leq 2 \hspace{0.5mm}\pr\left(\frac{2 |S_{t}|}{\sqrt{t}} \geq x\right), 
\end{equation}
for all $x > 2 \sqrt{2}$. Letting $\dis{\bar{\Theta}_{t, r}(x) = \pr\left(\frac{m_{t}^{r}}{t^{r/2}} \geq x\right)}$ and $\dis{\bar{\Delta}_{t, r}(x) = \pr\left(\frac{2 |S_{t}|^{r}}{t^{r/2}} \geq x\right)}$, (\ref{bilmax}) yields that
\begin{equation}\label{GamDel}
\bar{\Theta}_{t, r}(x) \leq 2 \bar{\Delta}_{t, r}(x), \mbox{ for all } x > (2 \sqrt{2})^{r},
\end{equation}
$r >0$. Further, we will show that
\begin{equation}\label{equis}
\dis{\left\{ \left(\frac{2|S_{t}|}{\sqrt{t}}\right)^{p}, t \geq 1\right\} \mbox{ is uniformly integrable for all } p \in \Na,}
\end{equation}
and hence, from the definition of uniform integrability, (\ref{equis}) and (\ref{GamDel}) yield (\ref{aimm}). 

To complete the proof, it remains to show (\ref{equis}), for which we will again check the hypotheses of Lemma \ref{Billem}. We claim that
\begin{equation}\label{momclt}
\E \left(\frac{|S_{t}|}{\sqrt{t}}\right)^{p} \rightarrow \E|N|^p, \mbox{ as } t \rightarrow \infty,
\end{equation}
for all $p \in \Na$, where $N$ denotes a standard normal. To show (\ref{momclt}) note that, for all $p \geq 2$, this is a direct consequence of the second part of the statement in [Theorem 7.5.1, \cite{G12}], since we have that $\E(|\zeta_{i}|^{p})=1$. Whereas the case $p=1$ follows from the central limit theorem, cf. Theorem 7.1.1 in \cite{G12} and checking the general conditions for convergence in distribution to extend to convergence of moments, cf.\ for instance Theorem 5.5.1 in \cite{G12}. To do this simply note that $\E(|\zeta_{i}|)=1$, and hence that $(|\zeta_{i}|: i \geq 1)$ are uniformly integrable since these are uniformly bounded by an integrable random variable, cf.\ for instance, Theorem 5.4.4 in \cite{G12} . Furthermore, from the central limit theorem and the continuous mapping theorem, cf. Theorem 5.10.4 in \cite{G12}, we have that the convergence in distribution analogue of (\ref{momclt}) holds. In addition, we have that $\E\left(\frac{|S_{t}|}{\sqrt{t}}\right)^{p}< \infty$ from a consequence of the Marcinkiewicz--Zygmund
inequalities, Corollary 3.8.2 in \cite{G12}, and that $\E|N|^{p} < \infty$. From these facts we have that (\ref{equis}) follows, and the proof is complete. 
\end{proof} 

\begin{remark}
\textup{We note that an alternative route to show (\ref{equis}) may be concocted by consulting the proof of [Theorem 7.5.1, \cite{G12}]; from the first display there, we have that (\ref{equis}) holds for all $p \geq 2$ since $\E(|\zeta_{i}|^{p})=1$, whereas (\ref{equis}) for $p=1$ follows again from this fact by a simple criterion  for uniform integrability, cf.\ for instance [Theorem 5.4.2, \cite{G12}].}
\end{remark}

\subsubsection{Uniform integrability in higher dimensions}\label{subsecunifint}

\begin{proof}[Proof of Proposition \ref{uin}.]
We will need the following generic Lemma. 
\begin{lemma}\label{gener}
If $\{\left(X_{n,k}^{p}\right)^{\infty}_{k=1}; n = 1, \dots, N\}$  is a collection of uniformly integrable sequences of positive random variables for every $p \in \Na$, then $\left(\left(\sum_{n=1}^{N} X_{n,k}\right)^{p}\right)_{k=1}^{\infty}$ is uniformly integrable for every $p \in \Na$.       
\end{lemma}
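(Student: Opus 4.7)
The plan is to reduce the claim to two standard facts about uniform integrability: (i) a finite sum of uniformly integrable sequences is uniformly integrable, and (ii) if $|Y_k|\leq Z_k$ pointwise and $(Z_k)$ is uniformly integrable, then so is $(Y_k)$. The bridge between the hypothesis and the conclusion is the elementary convexity inequality
\begin{equation*}
\left(\sum_{n=1}^{N} a_{n}\right)^{p} \leq N^{p-1} \sum_{n=1}^{N} a_{n}^{p}, \qquad a_{n}\geq 0, \ p\geq 1,
\end{equation*}
which follows at once from Jensen's inequality applied to the convex function $x\mapsto x^{p}$ on $\Rp$ with the uniform weights $1/N$ (equivalently, from the power-mean inequality).

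First, I would fix $p\in\Na$ and apply the above inequality pointwise in $\omega$ to the nonnegative random variables $X_{1,k}(\omega), \dots, X_{N,k}(\omega)$, obtaining
\begin{equation*}
\left(\sum_{n=1}^{N} X_{n,k}\right)^{p} \leq N^{p-1} \sum_{n=1}^{N} X_{n,k}^{p}, \quad k\geq 1.
\end{equation*}
By hypothesis, for this particular value of $p$, each of the $N$ sequences $(X_{n,k}^{p})_{k\geq 1}$ is uniformly integrable. Since the class of uniformly integrable families is closed under finite sums (which is immediate from the characterization via $\sup_{k}\E(|Y_{k}|\I\{|Y_{k}|>a\})\to 0$ as $a\to\infty$, splitting the event $\{|Y_{k}+Y_{k}'|>a\}\subseteq \{|Y_{k}|>a/2\}\cup\{|Y_{k}'|>a/2\}$), the sequence $\left(N^{p-1}\sum_{n=1}^{N} X_{n,k}^{p}\right)_{k\geq 1}$ is also uniformly integrable.

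Finally, using the standard domination principle for uniform integrability and the display above, I conclude that $\left(\left(\sum_{n=1}^{N}X_{n,k}\right)^{p}\right)_{k\geq 1}$ is uniformly integrable. Since $p\in\Na$ was arbitrary, the proof is complete. No step looks like a genuine obstacle; the only substantive input is Jensen's inequality to turn the $p$-th power of a sum into a sum of $p$-th powers, which is exactly the form in which the hypothesis is phrased.
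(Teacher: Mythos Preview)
Your proof is correct. It coincides, in fact, with the alternative argument the paper itself sketches in the Remark immediately following its proof of this lemma (there phrased via H\"older's inequality, which gives the same constant $N^{p/q}=N^{p-1}$ when $1/p+1/q=1$). The paper's \emph{main} proof, however, proceeds differently: it inducts on $N$, writes $Y_k=\sum_{n=1}^{N-1}X_{n,k}$ and $W_k=X_{N,k}$, expands $(Y_k+W_k)^p$ by the binomial theorem, and then handles each cross term $Y_k^{a}W_k^{b}$ via a H\"older step that reduces matters to the uniform integrability of $(Y_k^{2a})$ and $(W_k^{2b})$, the former covered by the induction hypothesis. Your route is more direct: one pointwise convexity inequality, then closure of uniform integrability under finite sums, then domination. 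The inductive/binomial approach avoids the domination step but pays for this by having to control products of powers, which is why it must invoke the hypothesis at exponents $2a$ and $2b$ rather than just at $p$; your argument uses the hypothesis only at the given exponent $p$.
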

\begin{proof}[Proof of Lemma \ref{gener}.]
The proof proceeds by induction in $N$. Clearly the statement holds for $N=1$. We assume that it holds for $N-1$.  Let $Y_{k}: = \sum_{n=1}^{N-1} X_{n,k}$ and $W_{k}:=X_{N,k}$. Since finite sums of uniformly integrable random variables are uniformly integrable (cf., for instance, Theorem 5.4.6 in \cite{G12}), an application of binomial theorem gives that it suffices to show that $c Y_{k}^{a}W_{k}^{b}$ are uniformly integrable for all $a$ and $b$ non-negative integers such that $a+b = p$, and where $c$ can be taken to be $c=1$ from the definition of uniform integrability without loss of generality.  From an application of the H\"older inequality (cf., for instance, Theorem 5.4.7 in \cite{G12}), we have that it suffices to show that $Y_{k}^{2a}$ and $W_{k}^{2b}$ are uniformly integrable. However, this holds for the former sequence by the induction hypothesis, and for the latter by the assumptions of the theorem.                                                                
\end{proof}

From [(\ref{coupl1}), Lemma \ref{propcoup}] we have that
\begin{equation}\label{tpbound}
\left(\frac{\til{T}_{r}}{r^{2}}\right)^{p} \leq \left( \sum_{n=1}^{N} \frac{\tau_{n,r}}{r^{2}}\right)^{p}
\end{equation}
almost surely. Further, from [(\ref{coupl2}), Lemma \ref{propcoup}] we have that 
\begin{equation}\label{mbound}
\left(\frac{\til{M}_{t}}{\sqrt{t}}\right)^{p} \leq  \left(\sum_{n=1}^{N} \frac{m_{n,t}}{\sqrt{t}}\right)^{p}
\end{equation}
almost surely. From Lemma \ref{gener},  we hence have that  (\ref{intA}) follows 
from (\ref{tpbound}) due to [(\ref{mom1}); Lemma \ref{lmomEK}], whereas (\ref{intB}) follows from (\ref{mbound}) due to [(\ref{aimm}); Lemma \ref{lmomEK}]. 
\end{proof} 

\begin{remark}\textup{We note that a another proof to Lemma \ref{gener} can be sketched as follows. The H\"older inequality for positive random variables gives that $\left(\sum_{n=1}^{N} X_{n,k}\right)^{p} \leq N^{p/q} \sum_{n=1}^{N} X_{n,k}^{p}$,
$\frac{1}{p}+\frac{1}{q} = 1$, from which, by invoking (Theorem 5.4.6 in \cite{G12}), another proof can be shown.}
\end{remark}

\subsubsection{Proof of Theorem \ref{momeN}}\label{subthmprf}
\begin{proof}[Proof of Theorem \ref{momeN}.] We will first show the following statement we need to use. 
\begin{lemma}\label{TMcon}
\begin{equation}\label{mula}
\frac{\til{T}_{r}^{p}}{r^{2p}}  \xrightarrow{d} \wdh{T}^{p}  \hs \mbox{ as }  r \rightarrow \infty.
\end{equation}
and that 
\begin{equation}\label{mulb}
\frac{1}{t^{p/2}} \til{M}_{t}^{p} \xrightarrow{d} \wdh{M}^{p}  \hs \mbox{ as }  t \rightarrow \infty, 
\end{equation}
\end{lemma}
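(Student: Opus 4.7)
The plan is to derive both (\ref{mula}) and (\ref{mulb}) as consequences of the multidimensional invariance principle combined with the continuous mapping theorem, first obtaining the statements for $p=1$ and then lifting them to arbitrary $p\geq 1$. Recall that by the remark preceding Proposition \ref{corBM}, the covariance matrix of $\Ze_1$ is $\Sigma = \frac{1}{N}I$, which matches that of $\wdh{\bf W}$. Accordingly, the natural diffusive rescaling
\[
\wdh{\bf W}^{(t)}_u := \frac{1}{\sqrt{t}}\Ze_{\lfloor t u \rfloor}, \qquad u\in[0,1],
\]
satisfies, by the $N$-dimensional functional central limit theorem (Donsker's invariance principle; cf.\ Lemma 3.4.1 in \cite{LL10} together with standard Cramér--Wold arguments),
\[
\wdh{\bf W}^{(t)}\;\xRightarrow{d}\;\wdh{\bf W}\quad\text{in }D([0,1],\R^{N}),
\]
as $t\to\infty$. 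Since the limit $\wdh{\bf W}$ has almost surely continuous paths, this convergence upgrades to weak convergence in the uniform topology on $C([0,1],\R^{N})$ (at least along suitable linear interpolations of the walk, which differ from the piecewise-constant version by at most one step and hence have negligible impact in the scaling limit).

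Proof of (\ref{mulb}): The functional $F\colon C([0,1],\R^{N})\to\R_{+}$ defined by $F(\omega)=\sup_{u\in[0,1]}|\omega(u)|$ is continuous in the uniform topology. Noting that
\[
\frac{\til{M}_{t}}{\sqrt{t}} \;=\; \sup_{u\in[0,1]} \bigl|\wdh{\bf W}^{(t)}_{u}\bigr|\;+\;o(1),
\]
the continuous mapping theorem yields $\til{M}_{t}/\sqrt{t}\xrightarrow{d}\wdh{M}$ as $t\to\infty$. A second application of the continuous mapping theorem, now to the continuous map $x\mapsto x^{p}$ on $\R_{+}$, gives (\ref{mulb}).

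Proof of (\ref{mula}): Applying the Donsker scaling with $t=r^{2}$ and changing variables gives
\[
\frac{\til{T}_{r}}{r^{2}} \;=\; \inf\Bigl\{ u\ge 0 :\; \bigl|\wdh{\bf W}^{(r^{2})}_{u}\bigr| > 1\Bigr\} \;+\;o(1).
\]
The hitting-time functional $\omega\mapsto\inf\{u\ge 0:|\omega(u)|>1\}$ on $C([0,1],\R^{N})$ is continuous at every path that does not linger on the boundary of the unit $L^{\infty}$-ball; since the coordinate processes of $\wdh{\bf W}$ are independent one-dimensional Brownian motions (up to a scalar), $\wdh{\bf W}$ a.s.\ crosses each face of this ball strictly, so this continuity set has full measure under the law of $\wdh{\bf W}$. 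The continuous mapping theorem then delivers $\til{T}_{r}/r^{2}\xrightarrow{d}\wdh{T}$, and a further application with $x\mapsto x^{p}$ yields (\ref{mula}).

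The main technical subtlety I anticipate is the a.s.\ continuity of the first-exit functional from the $L^{\infty}$-ball; unlike the smooth $L^{2}$-ball, the hypercube has a boundary with lower-dimensional faces and edges, so one must check that $\wdh{\bf W}$ does not exit through a corner or tangentially along a face. This follows from the product structure of $\wdh{\bf W}$, since the coordinate Brownian motions almost surely attain strict maxima at distinct times and oscillate near any hitting time, ensuring immediate strict exit. The remainder is a routine verification that the discretization error between the walk and its interpolation is $o(r)$ uniformly, so that the exit time of $\Ze$ from $\B_{r}$ matches the exit time of the rescaled path from the continuous unit ball up to lower-order terms.
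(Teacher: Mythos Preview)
Your proposal is correct and follows essentially the same route as the paper: invoke the multidimensional Donsker invariance principle to get weak convergence of the rescaled walk to $\wdh{\bf W}$, apply the continuous mapping theorem with the sup functional for (\ref{mulb}) and the first-exit-time functional for (\ref{mula}), and then push through $x\mapsto x^{p}$. The only point to tidy is that the exit-time functional should be defined on $C([0,\infty),\R^{N})$ (as the paper does) rather than $C([0,1],\R^{N})$, since $\wdh{T}$ exceeds $1$ with positive probability; your extra discussion of a.s.\ continuity of the exit functional at $\wdh{\bf W}$ is more detailed than the paper's, which simply asserts it.
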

\begin{proof} Let $C_{N}$ be all continuous $\phi:[0,\infty) \rightarrow \R^{N}$, equip with the usual metric $\rho$, so that $\rho(\phi_{n}, \phi_{0}) \rightarrow 0$, as $n \rightarrow \infty$, if and only if, for any $k$,
\[
\lim_{n \rightarrow \infty} \sup_{0 \leq t \leq k}|\phi_{n}(t) - \phi_{0}(t)| = 0.  
\]
Let ${\bf{W}}^{(n)}(s, \omega) = \frac{1}{\sqrt{n}}{\bf Z}_{[ns]}(\omega)$, let also ${\bf W}(t)$ be $N$-dimensional Brownian motion covariance matrix $\Sigma = \frac{1}{N} I$, and further let $\Psi: C_{N} \rightarrow \R$ be any function which is continuous. From, for instance, [Theorem 3.5.1, \cite{LL10}], we have that in $C_{N}$,
\begin{equation}\label{bro}
\Psi({\bf{W}}^{(n)}) \Rightarrow \Psi({\bf W}), \mbox{ as } n \rightarrow \infty.  
\end{equation}

Let $\Psi_{1}({\bf x}) = \inf\{s: |{\bf x}(s)| >1 \}$ and let $M_{n} = \inf\{ t: | {\bf Z}_{t} | > \sqrt{n}\}$. Since $\Psi_{1}$ is a.s.\ continuous and, by linear interpolation, 
\[
\inf\{ ns: |{\bf Z}_{[ns]}| \geq \sqrt{n} \} = \inf\{ s:  |{\bf Z}_{s}| \geq \sqrt{n}\}, 
\]
for all $n$, (\ref{bro}) gives
\[
\frac{M_{n}}{n} \xrightarrow{d} \wdh{T},  \mbox{ as } n \rightarrow \infty, 
\]
so that (\ref{mula}) follows by an application of the continuous mapping theorem.

Let $\Psi_{2}({\bf x}) = \sup_{0 \leq s \leq 1} |{\bf x}(s)|$. Since $\Psi_{2}$ is a.s.\ continuous and by linear interpolation we have that
\begin{equation*}\label{interp}
\sup_{0 \leq s \leq 1} \frac{|{\bf Z}_{[n s]}|}{\sqrt{n}}  =  \sup_{0 \leq j  \leq n} \frac{| { \bf Z}_{j} |}{\sqrt{n}},  
\end{equation*}
for all $n$, (\ref{bro}) gives
\begin{equation*}\label{eq:}
 \sup_{0 \leq j  \leq n} \frac{| { \bf Z}_{j} |}{\sqrt{n}} \xrightarrow{d}  \wdh{M}.
\end{equation*}
which yields (\ref{mulb}) by an application of the continuous mapping theorem, and hence, the proof is complete. 
\end{proof}

\noindent To complete the proof, note that the limiting distributions associated to  (\ref{mula}) and  (\ref{mulb}) are given by [\ref{Ftil}, Proposition \ref{corBM}] and by [\ref{Gtil}, Proposition \ref{corBM}] respectively. Hence, by a general result, see for instance [Theorem 5.5.9, \cite{G12}], we have that (\ref{eqA}) follows from (\ref{mula}) combined with [(\ref{intA}), Proposition \ref{uin}], and that (\ref{eqB}) follows from (\ref{mulb}) combined with [(\ref{intB}), Proposition \ref{uin}]. Thus, the proof is complete.
\end{proof}

\noindent \textbf{Acknowledgments.} I wish to thank two anonymous referees for detailed suggestions pertaining to presentation and misprints. The work of the author is currently supported financially by {\small PNPD/CAPES} postdoc program.

\textsc{ \\
\noindent Instituto de Matem\' atica e Estat\' istica,\\
Universidade de S\~ ao Paulo\\
Rua do Mat\~ ao, 1010\\
CEP 05508-900- S\~ ao Paulo\\
Brasil}

\end{document}